\newtheorem{theorem}{Theorem}[section]
\newtheorem{proposition}[theorem]{Proposition}
\newtheorem{lemma}[theorem]{Lemma}
\newtheorem{corollary}[theorem]{Corollary}
\theoremstyle{definition}
\newtheorem{definition}[theorem]{Definition}
\newtheorem{example}[theorem]{Example}
\newtheorem{notation}[theorem]{Notation}
\newtheorem{remark}[theorem]{Remark}
\numberwithin{equation}{section}
\author{Federico Castillo}
\address{Pontificia Universidad Cat\'olica de Chile.
E-mail: federico.castillo@mat.uc.cl}
\begin{document}
\title{A pithy look at the Polytope Algebra}
\maketitle

\begin{abstract}
This is a hands on introduction to McMullen's Polytope Algebra. More than interesting on its own, this algebra was McMullen's tool to give a combinatorial proof of the g-theorem.
\end{abstract}

\section{Introduction}

One of the biggest achievements in polytope theory was the complete characterization of $f$-vectors of simplicial polytopes, known as the $g$-theorem (for a precise statement see \cite[Chapter 3]{green}). In the early 70's Peter McMullen conjectured a set of necessary and sufficient conditions and less than a decade later Lou Billera and Carl Lee proved sufficiency, whereas Richard Stanley proved necessity. For an interesting survey about the history of this result we refer the reader to \cite{gteo}.

Stanley's proof in \cite{necessity} imports a result from complex geometry, namely the Hard-Lefschetz theorem, so for a while people attempted to find a more combinatorial proof, one that used more elementary arguments. McMullen succeeded initially in \cite{simple} and then corrected and simplified parts of it in \cite{weights}. The heart of the $g$-theorem lies in associating, to each simplicial polytope $P$, a finite dimensional algebra with particular properties. By now there are many different ways of doing this. For instance, in \cite{billera} Billera uses piecewise polynomial functions on a fan (an approach also explored by Brion \cite{brion}), and in \cite{toric} Fulton-Sturmfels use the Chow ring of the associated toric variety which they show is isomorphic to the Minkowski weights, which is the same ring that McMullen employs in \cite{weights} although he constructs it from his earlier work in \cite{main}.

Here we give an overview of the Polytope Algebra as developed in \cite{main}. Of all the approaches mentioned above, this one is of the easiest to define and in a way the most ``polytopal". Apart from the intrinsic interest, we also wanted to introduce, in a natural way, the notion of Minkowski weights which continues to play an important role in research nowadays, for example in tropical geometry  \cite{tropical}.

\section*{Acknowledgments}
The author is deeply grateful to Raman Sanyal from whom he learned most of the material. These notes combines the original material straight from \cite{main} together with Sanyal lectures in the MSRI Summer School \emph{Positivity Questions in Geometric Combinatorics} in summer 2017.
The author also thanks Takayuki Hibi and Akiyoshi Tsuchiya for the kind hospitality and the organization of the \emph{Summer Workshop on Lattice Polytopes 2018}.

\section{Polytopes and their faces}

\begin{definition}
A \emph{polytope} $P\subset \mathbb{R}^d$ is the convex hull of finitely many points $\{\textbf{v}_1.\cdots,\textbf{v}_m\}$. More precisely,
\[
\displaystyle P=\textrm{conv}\{\textbf{v}_1,\cdots,\textbf{v}_m\}=\left\{\sum_{i=1}^m\lambda_i\textbf{v}_i: \lambda_1+\cdots+\lambda_m=1, \lambda_1,\cdots,\lambda_m\geq0\right\}.
\]
\end{definition}

An inclusion minimal set of points whose convex hull is $P$ is called the vertex set of $P$, $\textrm{vert}(P)$. Such a minimal set exists and is unique. 

\begin{theorem}[Minkowski-Weyl]
Every polytope $P$ is the bounded intersection of finitely many halfspaces. More precisely, there exist $\textbf{a}_1,\cdots,\textbf{a}_n\in\mathbb{R}^d\backslash\{0\}$ and  $b_1,\cdots,b_n\in\mathbb{R}$ such that
\[
\displaystyle P = \left\{ \textbf{x}\in\mathbb{R}^d: \textbf{a}_1^t\textbf{x}\leq b_1, \cdots, \textbf{a}_n^t\textbf{x}\leq b_n\right\}.
\]
\end{theorem}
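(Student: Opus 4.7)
The plan is to homogenize and reduce the problem to the analogous statement for polyhedral cones, which can then be handled by Fourier--Motzkin elimination. Given $P = \mathrm{conv}\{\mathbf{v}_1, \ldots, \mathbf{v}_m\} \subset \mathbb{R}^d$, I lift to $\mathbb{R}^{d+1}$ and consider the finitely generated cone
\[
C = \mathrm{cone}\{(\mathbf{v}_1, 1), \ldots, (\mathbf{v}_m, 1)\} = \left\{ \sum_{i=1}^m \lambda_i (\mathbf{v}_i, 1) : \lambda_i \geq 0 \right\} \subset \mathbb{R}^{d+1},
\]
so that $P = \{\mathbf{x} \in \mathbb{R}^d : (\mathbf{x}, 1) \in C\}$. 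If $C$ is cut out by finitely many linear inequalities $\mathbf{c}_j^t \mathbf{y} \leq 0$ on $\mathbb{R}^{d+1}$, then intersecting with the affine hyperplane $x_{d+1} = 1$ yields a finite set of affine inequalities describing $P$, so this reduction step is immediate.

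To describe $C$ by inequalities I realize it as the projection onto the first $d+1$ coordinates of
\[
Q = \{(\mathbf{x}, \lambda) \in \mathbb{R}^{d+1} \times \mathbb{R}^m : \mathbf{x} - A\lambda = 0,\ \lambda \geq 0\},
\]
where $A$ is the matrix whose columns are the $(\mathbf{v}_i, 1)$. The set $Q$ is cut out by finitely many linear inequalities (equalities split into two inequalities each). I then invoke Fourier--Motzkin elimination: given a finite system of linear inequalities, one can eliminate a single variable by replacing all inequalities involving it with the nonnegative combinations of each positive-coefficient inequality with each negative-coefficient one, producing another finite system whose solution set is exactly the projection. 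Iterating this procedure to eliminate all of the $\lambda_j$ yields a finite inequality description of $C$.

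Boundedness of $P$ is trivial: since $\sum \lambda_i = 1$ and $\lambda_i \geq 0$, every $\mathbf{x} \in P$ satisfies $\|\mathbf{x}\| \leq \max_i \|\mathbf{v}_i\|$, so $P$ lies in a ball. The only substantial step is Fourier--Motzkin elimination itself; verifying that the combined inequalities describe \emph{exactly} the projection, rather than merely a superset, is the heart of the argument. This is where the main technical obstacle lies, compounded by the fact that the number of inequalities can roughly square at each elimination step, so care is needed to keep the bookkeeping honest. Once this combinatorial step is established, the theorem follows by applying it $m$ times and translating the resulting conic description back to $P$ via the homogenization.
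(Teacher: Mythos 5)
The paper states the Minkowski--Weyl theorem without proof, treating it as standard background, so there is no proof in the paper to compare against. Your homogenization-plus-Fourier--Motzkin argument is the standard proof of the ``V-representation implies H-representation'' direction, and it is correct in outline. The reduction of $P$ to the cone $C$, and of $C$ to the projection of the explicitly-inequality-defined set $Q$, is sound, and your description of the elimination step is accurate.

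Two points worth tightening. First, for the dehomogenization step to work you need the inequalities describing $C$ to be \emph{homogeneous} ($\mathbf{c}_j^t\mathbf{y}\le 0$, no constant term); this is true because every constraint defining $Q$ is homogeneous and Fourier--Motzkin produces nonnegative combinations of given inequalities, hence preserves homogeneity, but you assert the form of the output without noting this invariant. Second, after dehomogenizing, some of the resulting inequalities may have $\mathbf{a}_j = 0$ (normal purely in the lifting coordinate); for nonempty $P$ these are redundant and can be discarded, which you should say explicitly since the statement requires $\mathbf{a}_j \neq 0$. Finally, you are candid that you do not actually prove the Fourier--Motzkin lemma --- that the eliminated system describes exactly the projection, not a superset --- and that lemma really is the entire mathematical content here; as written your argument is a correct reduction to a cited combinatorial fact rather than a self-contained proof, which is a perfectly reasonable thing to do for a result the paper itself does not prove.
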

As before, there may be redundant inequalities. An inclusion minimal set of inequalities that define $P$ is called the set of facet inequalities. Such a minimal set exists and is unique.

\begin{definition}
Let $P\subset \mathbb{R}^d$ be a polytope. We define the following notions.
\begin{enumerate}
\item The \emph{affine hull} of $P$: $\textrm{aff}(P)=\bigcap \left\{L\subset \mathbb{R}^d\textrm{ affine space such that }P\subset L \right\}$.
\item The \emph{linear span} of $P$: $\textrm{lin}(P)=$ the linear subspace that is parallel to $\textrm{aff}(P)$.
\item The \emph{relative interior} of $P$: $\textrm{relint}(P)=$ the interior of $P$ relative to $\textrm{aff}(P)$.
\item The \emph{dimension} of $P$: $\dim(P)=\dim \textrm{lin}(P)$.
\end{enumerate}
\end{definition}

\textbf{Basic information:} From now on we are going to repeatedly use the following notation. $P$ is a $d$-dimensional polytope, or $d$-polytope, with $n$ facet inequalities (or just facets for short) and $m$ vertices. The set of all polytopes in $\mathbb{R}^d$ (not necessarily full dimensional) is denoted ${\mathcal{P}}_d$ and $\hat{{\mathcal{P}}_d}:={\mathcal{P}}_d\backslash \{\emptyset\}$.

\subsection{Faces of polytopes.}

For any $\textbf{c}\in\mathbb{R}^d$ (which we will think of as a linear functional) define
\begin{align*}
P^\textbf{c} :&= \{\textbf{x}\in P: \textbf{c}^t\textbf{x}\geq \textbf{c}^t\textbf{y}, \forall \textbf{y}\in P\},\\
&=\{\textbf{x}\in P: \textbf{c}^t\textbf{x}=\delta\},\quad \delta = \underset{\textbf{x}\in P}{\max}\hspace{5pt} \textbf{c}^t\textbf{x}.
\end{align*}
Subsets of $P$ of the above form are called \emph{faces} of $P$. If $P=\textrm{conv}\{\textbf{v}_1,\cdots,\textbf{v}_m\}$ is the vertex description of $P$, then $P^\textbf{c} = \textrm{conv}\{\textbf{v}_i: \textbf{c}^tv_i = \delta\}$, so faces are polytopes themselves. Moreover, there can be only finitely many faces. By convention, $\emptyset$ and $P$ itself are faces.
\begin{remark}
In what follows we will only consider face directions $\textbf{c}$ such that $|\textbf{c}|=1$.
\end{remark}

\begin{definition}
Let $P$ be a polytope and let $\mathcal{F}_k(P)$ be the set of $k$-dimensional faces. Without a subscript $\mathcal{F}(P)$ is the set of all faces of $P$. Equipped with the partial order given by containment, $\mathcal{F}(P)$ is called the \emph{face lattice} of $P$. This captures the combinatorial information of $P$. Two polytopes are said to be \emph{combinatorially equivalent} if they have isomorphic face lattices.
\end{definition}

\begin{definition}
A weaker invariant is the \emph{f-vector}: $f(P)=(f_0,f_1,\cdots,f_{d-1},f_d)$. Where $f_i$ is the number of $i$-dimensional faces of $P$ (recall that faces are themselves polytopes, hence they have dimension). Often people add $f_{-1}=1$ for the empty set. Also note that $f_d=1$ always. The 0-faces are the vertices, 1-faces the edges, and $(d-1)$-faces are the facets.
\end{definition}

We say a $d$-polytope is \emph{simple} if each vertex is contained in exactly $d$ facets. 

\section{Valuations.}

For any set $S\subset \mathbb{R}^d$ denote by $[S]$ the \emph{indicator function} from $\mathbb{R}^d$ to $\{0,1\}$ defined as
\[
[S](p)=\begin{cases} 1, \quad p\in S,\\ 0,\quad p\notin S.\end{cases}
\]

\begin{definition}
A function $f:\mathbb{R}^d\longrightarrow \mathbb{Z}$ is a \emph{polytopal simple function} if it can be written as
\[
f = \alpha_1[Q_1]+\cdots+\alpha_k[Q_k], \quad\alpha_i\in \mathbb{Z},
\]
with $Q_1,\cdots,Q_k$ polytopes.

\end{definition}

\begin{notation}
Recall that ${\mathcal{P}}_d = \{\textrm{ polytopes }\subset \mathbb{R}^d\}$ and now we define $\mathcal{SP}_d := \{\textrm{ polytopal simple functions }\}$.
Often we won't distinguish between a polytope and its indicator function.
\end{notation}

By definition, indicator functions of ${\mathcal{P}}_d$ span $\mathcal{SP}_d$ but they are far from a basis, they satisfy relations like $[P\cup Q]=[P]+[Q]-[P\cap Q]$.

\begin{definition}
A \emph{valuation} is any group homomorphism $\phi: \mathcal{SP}_d\longrightarrow G$ with $G$ an abelian group.
\end{definition}

The first and most fundamental valuation is the \emph{Euler characteristic} (See \cite[Theorem 2.4]{barvi}).

\begin{theorem}
There exists a unique valuation $\chi:{\mathcal{P}}_d\longrightarrow \mathbb{Z}$, called the \emph{Euler characteristic} with
\begin{itemize}
\item $\chi(\emptyset)=0$.
\item $\chi(P)=\begin{cases} 1,\quad P\in\hat{{\mathcal{P}}_d}\\ 0,\quad \textrm{else.}\end{cases}$
\end{itemize}
\end{theorem}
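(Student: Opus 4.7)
\emph{Uniqueness} is immediate: since the indicator functions $\{[P] : P \in \mathcal{P}_d\}$ span $\mathcal{SP}_d$ as an abelian group by construction, any group homomorphism from $\mathcal{SP}_d$ to $\mathbb{Z}$ is pinned down by its values on these generators, and the two bullets prescribe those values.

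For \emph{existence}, I plan to build $\chi$ by induction on the ambient dimension $d$. The base $d=0$ is trivial because $\mathcal{P}_0 = \{\emptyset,\{0\}\}$ and $\mathcal{SP}_0 \cong \mathbb{Z}$. Assume a valuation $\chi_{d-1}$ on $\mathcal{SP}_{d-1}$ with the required properties has been constructed. Given $f \in \mathcal{SP}_d$ and $t \in \mathbb{R}$, the restriction $f|_{H_t}$ to the hyperplane $H_t := \{x \in \mathbb{R}^d : x_d = t\}$ lies in $\mathcal{SP}_{d-1}$ (identifying $H_t \cong \mathbb{R}^{d-1}$), because $[Q]|_{H_t} = [Q \cap H_t]$ and the intersection of a polytope with an affine hyperplane is again a polytope. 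Hence $g_f(t) := \chi_{d-1}(f|_{H_t})$ is defined, and I would set
$$\chi_d(f) \;:=\; \sum_{t \in \mathbb{R}} \bigl(g_f(t) - g_f(t^+)\bigr), \qquad g_f(t^+) \;:=\; \lim_{\varepsilon \to 0^+} g_f(t + \varepsilon).$$
Linearity of $\chi_d$ is automatic from the linearity of restriction, of $\chi_{d-1}$, and of summation. To verify the prescribed values, fix a non-empty polytope $P$ and set $a := \min_{x \in P} x_d$, $b := \max_{x \in P} x_d$. For $t \in [a,b]$ the slice $P \cap H_t$ is a non-empty polytope, so the inductive hypothesis gives $g_{[P]}(t) = 1$; otherwise $g_{[P]}(t) = 0$. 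The only $t$ contributing a non-zero difference is $t = b$, where the jump yields $1$, so $\chi_d([P]) = 1$, and $\chi_d([\emptyset]) = 0$ is clear.

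The \emph{main obstacle} is to show the sum defining $\chi_d(f)$ is genuinely finite, so that the construction is unambiguous. Writing $f = \sum_i \alpha_i [Q_i]$ and setting $a_i := \min_{Q_i} x_d$, $b_i := \max_{Q_i} x_d$, the inductive hypothesis yields $g_f(t) = \sum_{i\,:\,t \in [a_i,b_i]} \alpha_i$, a step function whose jumps lie in the finite set $\{a_i,b_i\}_i$, so only finitely many terms in the defining sum are non-zero. A conceptually more elegant alternative is to define $\chi(f) := \sum_\tau f(x_\tau)(-1)^{\dim \tau}$ over the cells $\tau$ of a polytopal subdivision refining all the $Q_i$ (with $x_\tau \in \textrm{relint}(\tau)$ arbitrary); but this displaces the difficulty onto a refinement-invariance claim for the alternating dimension sum, which is essentially the Euler identity one is trying to prove and calls for a parallel induction of comparable depth. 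The hyperplane sweep above sidesteps that circularity.
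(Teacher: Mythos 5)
The paper does not actually prove this theorem; it is stated as imported background, with a citation to Barvinok's \emph{Integer Points in Polyhedra}, Theorem 2.4. So there is no ``paper's own proof'' to compare against, and I will assess your argument on its merits.

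Your proof is correct, and it is the standard Hadwiger-style sweep construction --- which is essentially the same argument Barvinok records in the cited theorem. The uniqueness claim is fine: a homomorphism on $\mathcal{SP}_d$ is determined by its values on the spanning set $\{[P]\}$, and the difficulty is entirely in existence, i.e.\ consistency with the relations among the indicators. Your inductive construction handles this cleanly: $g_f(t):=\chi_{d-1}(f|_{H_t})$ depends only on the function $f$, not on any chosen representation $\sum_i\alpha_i[Q_i]$, so well-definedness of $\chi_d$ is automatic once the sum is known to be finite; and linearity is inherited from linearity of restriction and of $\chi_{d-1}$. The finiteness argument via $g_f(t)=\sum_{i:\,t\in[a_i,b_i]}\alpha_i$ is exactly the right point to isolate as the crux --- you could sharpen it by noting that the right-jumps $g_f(t)-g_f(t^+)$ can in fact be nonzero only at the $b_i$, not all of $\{a_i,b_i\}$, but this does not affect the conclusion. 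Your closing remark about the alternative ``sum over cells of a subdivision'' definition, and why it merely relocates the difficulty into a refinement-invariance claim, is also on target. One cosmetic suggestion: say explicitly that terms with $Q_i=\emptyset$ may be discarded before forming $a_i,b_i$, and fix once and for all an affine identification $H_t\cong\mathbb{R}^{d-1}$ (e.g.\ forgetting the last coordinate) so that $f|_{H_t}\in\mathcal{SP}_{d-1}$ is literal rather than up to isomorphism.
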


Even though $\mathcal{SP}_d$ is spanned by indicator functions of polytopes, it also contains indicator functions of other non polytopal sets. For instance, the Euler formula is equivalent to the relation

\begin{lemma}
Let $P$ be a $d$-polytope then
\begin{equation*}\label{eq:relintp}
\displaystyle [P^\circ] = \sum_{F\subseteq P} (-1)^{\textrm{dim}(P)-\textrm{dim}(F)}[F].
\end{equation*}
Furthermore $\chi\left([P^\circ]\right)=(-1)^{\textrm{dim}(P)}$.
\end{lemma}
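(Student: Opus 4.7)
The plan is to verify the identity pointwise on $\mathbb{R}^d$ and then deduce the Euler-characteristic statement by applying the valuation $\chi$. Fix $x \in \mathbb{R}^d$ and compare $[P^\circ](x)$ with the right-hand side evaluated at $x$. Note that the $F=\emptyset$ term can be ignored throughout since $[\emptyset]\equiv 0$.

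Three cases arise. If $x\notin P$, both sides vanish. If $x\in P^\circ$, then $x$ is contained in no proper face of $P$ (every proper face is the intersection of $P$ with a supporting hyperplane, hence lies in the relative boundary), so the only nonzero term on the right comes from $F=P$, giving $(-1)^0=1=[P^\circ](x)$. The essential case is $x\in P\setminus P^\circ$, where the right-hand side must vanish. Here I would use the standard fact that there is a \emph{unique minimal face} $F_x$ of $P$ containing $x$, obtained as the intersection of all faces containing $x$, and that the set of faces containing $x$ equals $\{G\in\mathcal{F}(P) : F_x\subseteq G\}$. The right-hand side at $x$ is therefore
\[
\sum_{F_x\subseteq G\subseteq P}(-1)^{\dim P-\dim G}.
\]

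To evaluate this sum I would invoke the standard combinatorial fact that the upper interval $[F_x,P]$ in the face lattice $\mathcal{F}(P)$ is isomorphic to the face lattice of a polytope $Q$ of dimension $\dim P-\dim F_x-1$ (the ``quotient polytope'' or face figure at $F_x$), with $F_x\leftrightarrow\emptyset$ and $P\leftrightarrow Q$, and with $\dim G=\dim F_x+1+\dim G'$ under this correspondence. The sum then becomes $\sum_{G'\in\mathcal{F}(Q)}(-1)^{\dim Q-\dim G'}$, which vanishes by the Euler--Poincar\'e relation $\sum_{G'\in\mathcal{F}(Q)}(-1)^{\dim G'}=0$ (valid with the convention $\dim\emptyset=-1$).

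For the second assertion, apply the valuation $\chi$ to the identity. Since $\chi([\emptyset])=0$ and $\chi([F])=1$ for every nonempty face $F$, one obtains
\[
\chi([P^\circ])=\sum_{\emptyset\neq F\subseteq P}(-1)^{\dim P-\dim F},
\]
and the same Euler--Poincar\'e relation applied to $P$ itself (dropping the empty-face contribution of $-1$) yields $(-1)^{\dim P}$.

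The main obstacle is the third case: everything hinges on identifying the upper interval $[F_x,P]$ with the face lattice of a lower-dimensional polytope so that the Euler relation can be applied. This is a well-known structural feature of polytope face lattices, and once it is granted the proof is a straightforward bookkeeping of signs.
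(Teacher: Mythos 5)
The paper states this lemma without proof, simply remarking that it is a reformulation of the Euler formula, so there is no authorial argument to compare against. Your proof is correct and complete, and it follows the standard route.

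The three-way case split is the right framework. The two easy cases ($x\notin P$ and $x\in P^\circ$) are handled cleanly; in particular the observation that every proper face lies in the relative boundary justifies that only $F=P$ contributes when $x$ is in the relative interior. The heart of the argument is the third case, and you correctly identify both ingredients: (a) the existence of a unique minimal face $F_x$ containing $x$, with $\{G: x\in G\}=\{G: F_x\subseteq G\}$, and (b) the face-figure (quotient polytope) theorem, which identifies the interval $[F_x,P]$ with the face lattice of a polytope $Q$ of dimension $\dim P-\dim F_x-1$, shifting dimensions by $\dim F_x+1$. With that identification the alternating sum over $[F_x,P]$ becomes $(-1)^{\dim Q}\sum_{G'\in\mathcal F(Q)}(-1)^{\dim G'}$, which vanishes by Euler--Poincar\'e with the convention $\dim\emptyset=-1$ and $Q$ itself included. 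The final computation of $\chi([P^\circ])$ by applying $\chi$ termwise and invoking Euler--Poincar\'e for $P$ (minus the empty-face term) is also correct: one gets $(-1)^{\dim P}\sum_{\emptyset\neq F}(-1)^{\dim F}=(-1)^{\dim P}\cdot 1$. Equivalently, one could phrase step (b) as the statement that the M\"obius function of any interval $[F,G]$ in $\mathcal F(P)$ is $(-1)^{\dim G-\dim F}$, but that fact is itself usually proved via face figures and Euler, so this is the same argument in different clothing. You are right to flag the face-figure theorem as the one nontrivial structural input; once granted, the rest is sign bookkeeping.
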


Indicators functions come with a natural product, namely pointwise multiplication, but we consider a different product that gives the following ring structure.
\begin{proposition}
The map $\ast:\mathcal{SP}_d\times \mathcal{SP}_d\to \mathcal{SP}_d$ defined by $[P]\ast[Q]:=[P+Q]$ gives $\mathcal{SP}_d$ the structure of a commutative ring. The multiplicative unit is $[\textbf{0}]$, the indicator function at the origin.
\end{proposition}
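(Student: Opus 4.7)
The plan is to exhibit a formula for $f \ast g$ that depends only on $f$ and $g$ as functions on $\mathbb{R}^d$ (not on any particular presentation as a combination of indicator functions), so that well-definedness is automatic; once that is in hand, the ring axioms can be verified on generators $[P]$ and extended by bilinearity.

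First, I would set
\[
(f \ast g)(x) \;:=\; \chi\bigl(y \mapsto f(y)\, g(x-y)\bigr),
\]
where $\chi$ is the Euler characteristic valuation on $\mathcal{SP}_d$ from the previous theorem. The inner function $y\mapsto f(y)g(x-y)$ is polytopal simple: expanding $f = \sum_i \alpha_i[P_i]$ and $g = \sum_j \beta_j[Q_j]$, it equals $\sum_{i,j}\alpha_i\beta_j[P_i\cap(x-Q_j)]$, so $\chi$ applies. The assignment is manifestly bilinear in $(f,g)$, and when $f=[P]$ and $g=[Q]$ the inner function is $[P\cap(x-Q)]$, whose Euler characteristic is $1$ exactly when that intersection is nonempty, i.e.\ when $x\in P+Q$. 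Hence on generators $[P]\ast[Q]=[P+Q]$ as required, and more generally $f\ast g = \sum_{i,j}\alpha_i\beta_j[P_i+Q_j]\in\mathcal{SP}_d$, so the product indeed lands in $\mathcal{SP}_d$.

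With well-definedness and bilinearity secured, the remaining ring axioms pass to $\mathcal{SP}_d$ by bilinear extension from the corresponding facts about Minkowski sum on polytopes: commutativity from $P+Q=Q+P$, associativity from $(P+Q)+R=P+(Q+R)$, and the identity axiom from $P+\{\mathbf{0}\}=P$. The main obstacle is well-definedness itself. The naive route — fixing a presentation $f=\sum_i\alpha_i[P_i]$ and defining $f\ast[R]=\sum_i\alpha_i[P_i+R]$ — breaks down because Minkowski sum does not distribute over intersection (for instance, with $P=\{0\}$, $Q=\{1\}$, $R=[0,2]$ in $\mathbb{R}$ one has $(P\cap Q)+R=\emptyset$ while $(P+R)\cap(Q+R)=[1,2]$), so compatibility with the basic relation $[P\cup Q]+[P\cap Q]=[P]+[Q]$ is not transparent relation-by-relation. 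The Euler-characteristic convolution above sidesteps this by producing a representation-free formula, and the existence of $\chi$ as a valuation on all of $\mathcal{SP}_d$ does the heavy lifting.
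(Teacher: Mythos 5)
Your proof is correct. The paper states this proposition without proof, so there is no internal argument to compare against; the route you take --- defining $(f\ast g)(x):=\chi\bigl(y\mapsto f(y)\,g(x-y)\bigr)$ so that the product is manifestly independent of how $f$ and $g$ are presented as integer combinations of indicators --- is the standard one (it appears, for instance, in Barvinok's book, which the paper cites). You correctly identify that the real content here is well-definedness: extending $[P]\ast[Q]:=[P+Q]$ bilinearly from generators requires compatibility with the $\mathbb{Z}$-linear relations among indicator functions, and since Minkowski addition does not distribute over intersection this is not a trivial bookkeeping exercise. Once the $\chi$-convolution formula is in hand, closure, commutativity, associativity, and the unit axiom do pass by bilinearity from the corresponding properties of Minkowski sum, exactly as you say. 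Two minor remarks: the paper writes the Euler-characteristic theorem with domain $\mathcal{P}_d$, but since a valuation is by the paper's own definition a homomorphism out of $\mathcal{SP}_d$, your application of $\chi$ to arbitrary polytopal simple functions is exactly what that theorem provides; and in your illustrative counterexample $P=\{0\}$, $Q=\{1\}$ have non-convex union, so $[P\cup Q]+[P\cap Q]=[P]+[Q]$ is not literally one of the spanning relations among polytope indicators --- the conceptual point stands, but a relation that actually occurs would take $P,Q$ with $P\cup Q$ again a polytope, e.g.\ $P=[0,1]$, $Q=[1,2]$, $R=[0,1]$, where checking that Minkowski-summing by $R$ preserves the relation already requires a small argument rather than being transparent.
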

With this operation we gain some interesting inverses.
\begin{proposition}
Let $P\in\hat{{\mathcal{P}}_d}$, then $[P]$ is invertible:
\begin{equation}
[P]^{-1} = (-1)^{\dim P}[-\textrm{relint}(P)].
\end{equation}
\end{proposition}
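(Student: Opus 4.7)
My plan is to verify $[P]\ast(-1)^{\dim P}[-\textrm{relint}(P)]=[\mathbf{0}]$ pointwise on $\mathbb{R}^d$, converting the question into a Euler-characteristic calculation. First, reflecting the preceding Lemma through the origin yields $[-\textrm{relint}(P)] = \sum_{F}(-1)^{\dim P-\dim F}[-F]$, summed over nonempty faces $F$ of $P$. From the polytope case $[P+Q](x)=\chi(P\cap(x-Q))$ and bilinearity of $\ast$ one obtains the general convolution formula $([P]\ast f)(x)=\chi([P]\cdot f(x-\cdot))$ for any $f\in\mathcal{SP}_d$, where the dot denotes pointwise product of indicators. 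Applied to $f=[-\textrm{relint}(P)]$, so that $f(x-y)=[x+\textrm{relint}(P)](y)$, it becomes
\[
\bigl([P]\ast[-\textrm{relint}(P)]\bigr)(x) \;=\; \chi\bigl(P\cap(x+\textrm{relint}(P))\bigr).
\]
Thus the Proposition reduces to showing that this Euler characteristic equals $(-1)^{\dim P}$ at $x=0$ and vanishes otherwise.

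The case $x=0$ is immediate: $P\cap\textrm{relint}(P)=\textrm{relint}(P)$ and $\chi(\textrm{relint}(P))=(-1)^{\dim P}$ is the content of the preceding Lemma. For $x\neq 0$, I work inside $\textrm{aff}(P)$ so that $\textrm{relint}(P)=P^\circ$, and use the disjoint decomposition $x+P=(x+P^\circ)\sqcup\partial(x+P)$ together with valuation additivity to obtain
\[
\chi\bigl(P\cap(x+P^\circ)\bigr) \;=\; \chi\bigl(P\cap(x+P)\bigr) - \chi\bigl(P\cap\partial(x+P)\bigr).
\]
If $x\notin P-P$ all three sets are empty and we are done; otherwise the first term equals $1$, and everything hinges on showing $\chi(P\cap\partial(x+P))=1$.

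This contractibility statement is the main obstacle. Geometrically $P\cap\partial(x+P)$ is a ``cap'' of the PL-sphere $\partial(x+P)$ cut out by the polytope $P$, and since $x\neq 0$ forces the equal-volume translates $P$ and $x+P$ to be distinct with neither containing the other's interior, this cap is a nonempty proper subcomplex of $\partial(x+P)$. I would establish its contractibility by choosing $p_0\in P^\circ\cap(x+P)^\circ$ (nonempty whenever $x$ lies in the interior of $P-P$) and radially retracting $P\cap\partial(x+P)$ through $\partial(x+P)$ onto a single reference point, the key convexity fact being that the set of directions from $p_0$ hitting $\partial(x+P)$ strictly before $\partial P$ is open and star-shaped. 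The degenerate case $x\in\partial(P-P)$ can be handled by perturbing $x$ into the interior of $P-P$, since as $x$ varies away from the walls of a finite hyperplane arrangement the coefficients $[P-F](x)$ and hence $\chi(P\cap(x+\textrm{relint}(P)))$ are locally constant.
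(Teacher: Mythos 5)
Your overall strategy is sound and, modulo two gaps, it does prove the proposition: reduce the identity $[P]\ast(-1)^{\dim P}[-\textrm{relint}(P)]=[\mathbf{0}]$ to a pointwise Euler-characteristic computation via the convolution formula $([P]\ast f)(x)=\chi\bigl([P]\cdot f(x-\cdot)\bigr)$, which is correct and well-motivated (the paper itself states the proposition without proof). The $x=0$ case is fine, and the reduction for $x\neq 0$ to showing $\chi\bigl(P\cap\partial(x+P)\bigr)=1$ is a valid use of additivity.

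The first gap is the crucial contractibility claim. You assert that the set of directions from $p_0$ hitting $\partial(x+P)$ strictly before $\partial P$ is ``open and star-shaped,'' but this is not a standard or self-evident convexity fact: for a general pair of convex bodies sharing an interior point the analogous set can be disconnected (a long thin box sliced by a shorter fat one gives two components), and it is precisely the fact that $P$ and $x+P$ are translates of one another that saves the day. That is exactly the content that needs to be proved, and the sketch you give does not use the translate structure in any explicit way. To make this rigorous you would need to show geodesic star-shapedness around $-x/|x|$ using that $\rho_{x+P}(v)$ and $\rho_P(v)$ are radial functions of two translates of the same body; this is plausible but genuinely requires an argument, not an appeal to convexity in general.

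The second gap is the degenerate case. You say $x\in\partial(P-P)$ can be handled by perturbing $x$ into $\textrm{int}(P-P)$ because $\chi(P\cap(x+\textrm{relint}(P)))=\sum_F(-1)^{\dim P-\dim F}[P-F](x)$ is ``locally constant away from the walls.'' But $x\in\partial(P-P)$ \emph{is} on one of those walls (take $F=P$), and the indicator $[P-F]$ jumps from $1$ to $0$ exactly there, so local constancy fails at the very point you need it; the perturbation argument cannot bridge the jump. Fortunately this case is actually the easy one and needs no perturbation: if $x\in\partial(P-P)$, pick $\mathbf{c}\in\textrm{lin}(P)\setminus\{0\}$ with $x\in(P-P)^{\mathbf{c}}=P^{\mathbf{c}}-P^{-\mathbf{c}}$. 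For any $y\in P$ with $y-x\in\textrm{relint}(P)$ one gets $\mathbf{c}^t y>\mathbf{c}^t x-h_P(-\mathbf{c})=h_P(\mathbf{c})$, contradicting $y\in P$; hence $P\cap(x+\textrm{relint}(P))=\emptyset$ and $\chi=0$ directly. Replacing your perturbation argument with this, and supplying a real proof of the star-shapedness, would complete the proof.
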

The next proposition is fundamental for what follows. 
\begin{proposition}\label{thm:importantid}
If $P=\textrm{conv}\{v_1,\cdots,v_m\}$ is a polytope, then 
\begin{equation}\label{eq:importantid}
\left([P]-[v_1]\right)*\left([P]-[v_2]\right)*\cdots *\left([P]-[v_m]\right)=0.
\end{equation}
\end{proposition}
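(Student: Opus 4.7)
The plan is to verify pointwise vanishing: I fix an arbitrary $x\in\mathbb{R}^d$ and argue that the claimed product, written as a simple function, takes value $0$ at $x$. The first step is to distribute; using $[P]\ast[P]=[2P]$ and $[u]\ast[w]=[u+w]$ for points,
\[
\prod_{i=1}^{m}\bigl([P]-[v_i]\bigr) \;=\; \sum_{I\subseteq[m]} (-1)^{|I|}\,[K_I],
\qquad K_I := (m-|I|)\,P + \sum_{i\in I} v_i.
\]
From the convex representation $kP=\{\sum_j \lambda_j v_j : \lambda_j\ge 0,\ \sum_j \lambda_j = k\}$, substituting $\lambda_j\mapsto\mu_j-1$ for $j\in I$ and $\lambda_j\mapsto\mu_j$ otherwise yields the parametric description
\[
K_I \;=\; \Bigl\{\textstyle\sum_j \mu_j v_j \;:\; \mu_j\ge 0,\ \sum_j \mu_j = m,\ \mu_i\ge 1\ \forall i\in I\Bigr\}.
\]

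Next I would introduce the key auxiliary object, the polytope of representations of $x$,
\[
\Delta \;:=\; \bigl\{\mu\in\mathbb{R}^m_{\ge 0} : \textstyle\sum_j \mu_j = m,\ \sum_j \mu_j v_j = x\bigr\},
\]
together with its sub-polytopes $\Delta_I := \Delta\cap\{\mu : \mu_i\ge 1\ \forall i\in I\}$. The parametric description above says that $x\in K_I$ exactly when $\Delta_I\ne\emptyset$; since $\chi$ takes value $1$ on every non-empty polytope and $0$ on $\emptyset$, the pointwise value at $x$ equals $\sum_{I\subseteq[m]} (-1)^{|I|}\chi(\Delta_I)$.

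If $x\notin mP$ then $\Delta=\emptyset$ and the sum vanishes trivially. Otherwise, the pigeonhole principle applied to $m$ non-negative numbers summing to $m$ gives the polytopal cover $\Delta=\bigcup_{i=1}^m \Delta_{\{i\}}$ with $\Delta_I=\bigcap_{i\in I}\Delta_{\{i\}}$, and applying the valuation $\chi$ to the inclusion-exclusion identity of indicator functions yields
\[
1 \;=\; \chi(\Delta) \;=\; \sum_{\emptyset\ne I\subseteq[m]}(-1)^{|I|+1}\chi(\Delta_I),
\]
which, after adding the $I=\emptyset$ term, rearranges to $\sum_I (-1)^{|I|}\chi(\Delta_I)=0$. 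I expect the main conceptual obstacle to be recognising that $\Delta$ and its cover are the right gadgets; once in place, the alternating-sign identity follows mechanically from the Euler characteristic applied to a natural polytopal cover.
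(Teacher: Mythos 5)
Your proof is correct, and while it lands on the same pigeonhole observation at its core, the packaging is genuinely different and arguably more self-contained. The paper first establishes the identity for the standard simplex $\Delta_{m-1}$ (where it is a pointwise $0/1$ check: each $q$ with $\sum q_i = m$ has a unique barycentric preimage, so the pointwise value collapses to $\sum_{I\subseteq I_0}(-1)^{|I|}=(1-1)^{|I_0|}$ with $I_0\ne\emptyset$ by pigeonhole), and then transfers the result to an arbitrary $P=\pi(\Delta_{m-1})$ by invoking, without proof, the fact that the affine map $\pi$ induces a push-forward $\pi_*:\mathcal{SP}_m\to\mathcal{SP}_d$ respecting the linear relations among indicator functions. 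Your proof instead attacks the general case head-on: you fix $x\in\mathbb{R}^d$, introduce the fiber polytope $\Delta=\{\mu:\mu\ge 0,\ \sum\mu_j=m,\ \sum\mu_j v_j=x\}$ (which is exactly $\pi^{-1}(x)\cap m\Delta_{m-1}$), observe that $[K_I](x)=\chi(\Delta_I)$, cover $\Delta$ by the $\Delta_{\{i\}}$ via the same pigeonhole, and let inclusion--exclusion plus the Euler characteristic do the bookkeeping. What this buys you is that the one step the paper waves at --- that $\pi_*$ preserves linear relations --- is unpacked into an explicit Euler-characteristic computation on a concrete polytope, so no appeal to Groemer-type results on push-forwards is needed; you use only that $\chi$ is a valuation equal to $1$ on nonempty polytopes and $0$ on $\emptyset$, which the paper has already stated. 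The cost is that your argument is slightly longer in the simplex case (where the fiber is a point and the Euler-characteristic machinery is overkill), but it handles all polytopes uniformly, which is a pleasant trade.
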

Let's first illustrate an example.
\begin{example}\label{ex:1dimho}
Let $P$ be a one dimensional polytope $[a,b]\subset \mathbb{R}$. Then $([P]-[a])*([P]-[b])=[2P]-[P+a]-[P+b]+[a+b]=0$, as can be seen by inspection in Figure \ref{fig:onedim}. 
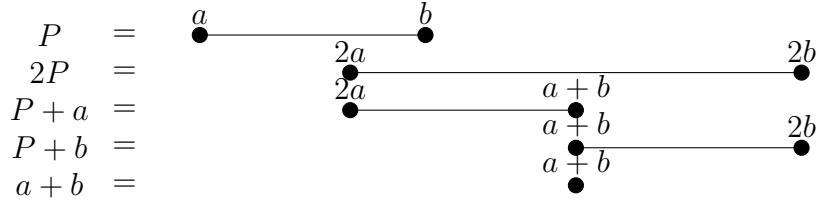
\begin{figure}[h]
\begin{tikzpicture}
\node at (0,1) {$P$};
\node at (0,0.5) {$2P$};
\node at (0,0) {$P+a$};
\node at (0,-0.5) {$P+b$};
\node at (0,-1) {$a+b$};

\node at (1,1) {$=$};
\node at (1,0.5) {$=$};
\node at (1,0) {$=$};
\node at (1,-0.5) {$=$};
\node at (1,-1) {$=$};

\draw (2,1)--(5,1);
\node[above] at (2,1) {$a$};
\node[above] at (5,1) {$b$};
\draw[fill] (2,1) circle [radius=0.1];
\draw[fill] (5,1) circle [radius=0.1];

\draw (2*2,0.5)--(2*5,0.5);
\node[above] at (2*2,1/2) {$2a$};
\node[above] at (5*2,1/2) {$2b$};
\draw[fill] (2*2,1/2) circle [radius=0.1];
\draw[fill] (5*2,1/2) circle [radius=0.1];

\draw (2+2,0)--(5+2,0);
\node[above] at (2+2,0) {$2a$};
\node[above] at (5+2,0) {$a+b$};
\draw[fill] (2+2,0) circle [radius=0.1];
\draw[fill] (5+2,0) circle [radius=0.1];

\draw (2+5,-0.5)--(5+5,-0.5);
\node[above] at (2+5,-0.5) {$a+b$};
\node[above] at (5+5,-0.5) {$2b$};
\draw[fill] (2+5,-0.5) circle [radius=0.1];
\draw[fill] (5+5,-0.5) circle [radius=0.1];

\node[above] at (2+5,-1) {$a+b$};
\draw[fill] (2+5,-1) circle [radius=0.1];
\end{tikzpicture}

\caption{An illustration of Proposition \ref{thm:importantid}}\label{fig:onedim}
\end{figure}

\end{example}

\begin{proof}[Sketch of proof]
We first prove it for $P=\Delta_{m-1}=\textrm{conv}\{\mathbf{e}_1,\cdots,\mathbf{e}_m\}=\{\textbf{x}\in\mathbb{R}^m: x_i\geq 0, \quad \sum x_i = 1\}.$ We define $\mathbf{e}_I := \sum_{i\in I}\mathbf{e}_i$ for $I\subset [m]$. We must show that
\begin{equation}\label{eq:importantidproof}
\displaystyle \sum_{I\subset[m]}(-1)^{|I|}\left[(m-|I|)\Delta_{m-1}+\mathbf{e}_I\right] = 0.
\end{equation}

For $q\in \mathbb{R}^m$ how do we decide if $q\in \left[(m-|I|)\Delta_{m-1}+\mathbf{e}_I\right]$? This is equivalent to $q-\mathbf{e}_I\in (m-|I|)\Delta_{m-1}$. This happens if and only if
\[
q-\mathbf{e}_I \geq 0\qquad \sum q_i = m,
\]
where the first inequality is coordinate wise. Now define $I_0:=\{i: q_i\geq 1\}$. For a fixed $q$ with $\sum q_i = m$ the value of the function in the left hand side of Equation \eqref{eq:importantidproof} is
\[
\displaystyle \sum_{I\subset I_0}(-1)^{|I|} = \sum_{j=0}^{|I_0|} \binom{I_0}{j}(-1)^j = (1-1)^{|I_0|} = \begin{cases}0,\quad I_0\neq \emptyset,\\ 1,\quad I_0=\emptyset, \end{cases}
\]
but $I_0$ will never be empty since we are asumming $\sum q_i = m$. This finishes the proof for the standard simplex.

\par For the general case we can write $P=\pi(\Delta_{m-1})$, where $\pi:\mathbb{R}^m\longrightarrow \mathbb{R}^d$, defined by mapping $\mathbf{e}_i\longrightarrow v_i$. Then one can argue that $\pi$ induces a map $\pi_*:\mathcal{SP}_m\to\mathcal{SP}_d$ sending $\pi_*[P]=[\pi(P)]$ for $P\in {\mathcal{P}}_m$ and such that the linear relations are preserved.
\end{proof}

\section{The Polytope Algebra}

We define $\mathcal{T}:= \mathbb{Z}\{[P+t]-[P]\quad \forall P\in {\mathcal{P}}_d, t\in\mathbb{R}^d\}.$ This is actually an \emph{ideal} of $\mathcal{SP}_d$, called the translation ideal. 
\begin{definition}
The polytope algebra is defined as
\[
\Pi^d := \mathcal{SP}_d/ \mathcal{T}.
\]
We denote by $\llbracket P\rrbracket$ the class of $[P]$ in $\Pi^d$.
\end{definition}
The multiplicative identity is $1:=\llbracket \{\textbf{0}\}\rrbracket$, the class of the origin as a zero dimensional polytope.
\begin{definition}
We define also \emph{dilation} maps which are in fact endormorphism. Let $D_\lambda:\Pi^d\to\Pi^d$ be defined in the generators as $D_\lambda\llbracket P\rrbracket:=\llbracket \lambda P\rrbracket$ for $\lambda\in\mathbb{R}_{\geq0}$. Notice that $D_\lambda$ and $D_{\lambda^{-1}}$ are inverses to each other for $\lambda>0$.
\end{definition}
\begin{example}
Let's begin by understanding $\Pi^1$. This is generated by \emph{integer} combinations of segments. For each line segment $[a,b]$ we have $\llbracket [a,b]\rrbracket=\llbracket [a,b)\rrbracket+\llbracket \{b\}\rrbracket=\llbracket [0,b-a)\rrbracket+\llbracket \{0\}\rrbracket$, since we can decompose and translate the pieces. Notice that the sum of two classes of half open segments can be represented again by a half open segment; $\llbracket [0,r)\rrbracket+\llbracket [0,s)\rrbracket=\llbracket [0,r)\rrbracket+\llbracket [r,s+r)\rrbracket=\llbracket [0,s+r)\rrbracket$.\\

It is not true that $\llbracket [0,s)\rrbracket\ast\llbracket [0,r)\rrbracket=\llbracket [0,s+r)\rrbracket$, since the definition of the product with Minkowski sums only is intended for the generators, the indicator functions of polytopes. Indeed we have
\[
\llbracket [0,s)\rrbracket\ast\llbracket [0,r)\rrbracket=\left(\llbracket [0,s]\rrbracket-\llbracket \{0\}\rrbracket\right)\ast\left(\llbracket [0,r]\rrbracket-\llbracket \{0\}\rrbracket\right),
\]
which we already saw in Example \ref{ex:1dimho} to be zero.

With this is mind, $\Pi^1\cong \mathbb{Z}\oplus\mathbb{R}$ with multiplication defined by $(a,b)\cdot(a',b')=(aa',ab'+a'b)$.
\end{example}

Now that we understand $\Pi^1$, we move on to the general case. The following is the main structural result.

\begin{theorem}\label{thm:structure}
The polytope algebra is a graded ring, generated in degree 1.
$\Pi^d=\Pi_0\oplus \Pi_1\oplus\cdots \oplus \Pi_d$. Furthermore
\begin{itemize}
\item[(i)] $\Pi_0\cong \mathbb{Z}$.
\item[(ii)] $\Pi_i$ is an $\mathbb{R}$-vector space for $i>0$.
\item[(iii)] $\Pi_d\cong \mathbb{R}$.
\end{itemize}
\end{theorem}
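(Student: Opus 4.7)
The plan is to carve $\Pi^d$ into its augmentation ideal and then use the dilation endomorphisms $D_\lambda$ to cut that ideal into eigenspaces indexed by degree, which will simultaneously force the $\mathbb{R}$-vector space structure on each piece.

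\textbf{Step 1 (augmentation and nilpotence).} Every point has trivial class $\llbracket \{p\}\rrbracket = 1$, so the assignment $\epsilon(\llbracket P\rrbracket) := 1$ extends to a well-defined ring homomorphism $\epsilon:\Pi^d\to\mathbb{Z}$. Its kernel $\Pi_{(+)} := \ker\epsilon$ is the augmentation ideal, generated additively by $x_P := \llbracket P\rrbracket - 1$. Applying Proposition \ref{thm:importantid} inside $\Pi^d$ (where all vertex classes collapse to $1$) yields $x_P^m = 0$ for any $P$ with $m$ vertices; a short induction on $\dim P$, reducing across faces, sharpens this to $\Pi_{(+)}^{d+1} = 0$. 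The additive splitting $\Pi^d = \mathbb{Z}\cdot 1 \oplus \Pi_{(+)}$ already gives (i).

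\textbf{Step 2 (dilation is polynomial and the grading).} The central claim is that $\lambda \mapsto D_\lambda\llbracket P\rrbracket$ is a polynomial in $\lambda$ of degree at most $d$. For integer $n\geq 0$, the Minkowski product gives $D_n\llbracket P\rrbracket = \llbracket P\rrbracket^n$, so the binomial theorem combined with $x_P^{d+1}=0$ yields
\begin{equation*}
D_n\llbracket P\rrbracket \;=\; (1+x_P)^n \;=\; \sum_{k=0}^{d}\binom{n}{k}\,x_P^k,
\end{equation*}
and the semigroup identity $(D_{1/q})^q = D_1$ together with uniqueness of $q$-th roots in the nilpotent multiplicative group $1+\Pi_{(+)}$ extends this to rational $\lambda$. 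Matching this formula against the geometric semigroup $\lambda \mapsto \llbracket \lambda P\rrbracket$ on all reals is the delicate step; once that is achieved, rewriting $D_\lambda\llbracket P\rrbracket = \sum_{k=0}^{d}\lambda^k r_k(\llbracket P\rrbracket)$ defines linear projectors $r_k$ on $\Pi^d$, and we set $\Pi_k := r_k(\Pi^d)$. Comparing coefficients of $\lambda^{i+j}$ in $D_\lambda(ab) = D_\lambda(a)D_\lambda(b)$ forces $\Pi_i\Pi_j \subseteq \Pi_{i+j}$, while $D_1 = \mathrm{id}$ gives $\sum_k r_k = \mathrm{id}$, so $\Pi^d = \bigoplus_{k=0}^{d}\Pi_k$. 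Since $D_\lambda$ acts as $\lambda^k\cdot\mathrm{id}$ on $\Pi_k$ and $\lambda$ ranges over $\mathbb{R}_{>0}$, each $\Pi_k$ for $k\geq 1$ inherits a canonical $\mathbb{R}$-vector space structure, giving (ii).

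\textbf{Step 3 (degree-one generation and the top piece).} Because $\Pi_{(+)}$ is nil of index $d+1$, the truncated series $\log(1+x) := \sum_{k=1}^{d}(-1)^{k-1}x^k/k$ and $\exp(y) := \sum_{k=0}^{d} y^k/k!$ are mutually inverse bijections between $\Pi_{(+)}$ and $1+\Pi_{(+)}$, the denominators being absorbed by the $\mathbb{R}$-structure on each $\Pi_k$ with $k\geq 1$. Setting $y_P := \log\llbracket P\rrbracket$ and taking $\log$ of $D_\lambda\llbracket P\rrbracket = \llbracket P\rrbracket^\lambda$ gives $D_\lambda y_P = \lambda\, y_P$, placing $y_P$ in $\Pi_1$. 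Exponentiating back yields $r_k(\llbracket P\rrbracket) = y_P^k/k! \in \Pi_1^k$, so $\Pi_k = \Pi_1^k$ and hence $\Pi^d$ is generated in degree one. For (iii), the Lebesgue volume defines a valuation $\mathrm{vol}:\Pi^d\to\mathbb{R}$ homogeneous of degree $d$ under dilation, so it factors through a nonzero $\mathbb{R}$-linear surjection $\Pi_d \twoheadrightarrow \mathbb{R}$; a complementary check that $\Pi_d$ is spanned over $\mathbb{R}$ by the single top product $y_P^d$ for any full-dimensional $P$ promotes this surjection to an isomorphism.

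The \emph{main obstacle} is the polynomiality statement in Step 2: bootstrapping the integer identity $D_n\llbracket P\rrbracket = (1+x_P)^n$ into a genuinely polynomial family $\{D_\lambda\}_{\lambda\geq 0}$ that agrees with the geometric Minkowski definition at every real $\lambda$. Everything downstream — the grading, the $\mathbb{R}$-structure, degree-one generation, and the volume identification of $\Pi_d$ — is a clean formal consequence of this polynomiality together with the nilpotence of $\Pi_{(+)}$.
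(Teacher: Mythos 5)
There is a genuine gap in Step~2, and you are right to flag it yourself as ``the delicate step'': you never establish that $\Pi_{(+)}$ is a uniquely divisible abelian group (equivalently, a $\mathbb{Q}$-vector space). Without that, the pieces of your argument do not assemble. To write $D_\lambda\llbracket P\rrbracket = \sum_{k=0}^d \lambda^k r_k(\llbracket P\rrbracket)$ you must interpolate from the integer values $D_n = (1+x_P)^n = \sum_k \binom{n}{k}x_P^k$, and extracting the coefficients $r_k$ from the binomial-coefficient basis already requires dividing by integers; similarly the formal binomial series for a $q$-th root, and the truncated $\log$ and $\exp$ you use in Step~3, all have $\mathbb{Q}$-coefficients that are undefined until you know $\frac{1}{m}x$ makes sense for $x\in\Pi_{(+)}$. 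Your fallback, ``uniqueness of $q$-th roots in $1+\Pi_{(+)}$,'' is not free either: uniqueness of roots in a group of the form $1+\mathfrak{n}$ with $\mathfrak{n}$ nilpotent hinges precisely on $\mathfrak{n}$ being torsion-free, which has not been shown. Saying in Step~3 that ``the denominators are absorbed by the $\mathbb{R}$-structure on each $\Pi_k$'' is circular, since the $\mathbb{R}$-structure is exactly what Step~2 was supposed to produce.

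The paper closes this gap with two substantive lemmas before touching $\log$ or the grading. Lemma~\ref{lem:divisible} proves divisibility directly from the integer identity: for a prime $m$ and a large power $N=m^e>d+1$ one expands $\llbracket P\rrbracket - 1 = \llbracket \tfrac{1}{N}P\rrbracket^N - 1 = \sum_{i=1}^d\binom{N}{i}(\llbracket \tfrac{1}{N}P\rrbracket - 1)^i$ and observes that every $\binom{N}{i}$ for $1\le i\le d$ is divisible by $m$. Lemma~\ref{lem:torsion} proves torsion-freeness using the filtration $Z_r$ by powers $(\llbracket P\rrbracket-1)^j$, $j\ge r$, together with the eigenvalue relation $D_n x \equiv n^r x \pmod{Z_{r+1}}$: if $nx=0$ then $D_n x\in Z_{r+1}$, and applying $D_{n^{-1}}$ pushes $x$ into $Z_{r+1}$, so iterating forces $x=0$ since the filtration terminates. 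These two steps are what make it legitimate to define $\Pi_k$ via $\frac{1}{k!}(\log\llbracket P\rrbracket)^k$ and to then run exactly the dilation-eigenspace argument you sketch for directness, grading, and degree-one generation. Your proposal captures the architecture but omits the load-bearing divisibility-and-torsion-freeness argument; a minor secondary point is that in Step~1 the paper's nilpotence bound $(\llbracket P\rrbracket - 1)^{d+1}=0$ comes from a triangulation and degree-count argument (Equations~\eqref{eq:weakexp}--\eqref{eq:poly}), not an induction across faces as you suggest.
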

The theorem means that $\Pi^d$ is almost an $\mathbb{R}$-algebra, except for the fact that $\Pi_0\cong \mathbb{Z}$. Notice that $\Pi^d$ comes from $\mathcal{SP}_d$ which is made of \emph{integer} combinations of indicator functions, so the fact that we end up with a $\mathbb{R}$ action should be surprising, in fact we will see below that scaling can be complicated. As a first step in proving Theorem \ref{thm:structure} we have the following proposition.

\begin{proposition}
The map $\Pi^d\longrightarrow \mathbb{Z}$ induced by $\chi$ allows us to decompose $\Pi^d = \mathbb{Z}\oplus \Pi_+$ as a direct sum, where $\Pi_+:= \textrm{ker } \chi$.
\end{proposition}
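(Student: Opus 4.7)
The plan is to first show that the Euler characteristic $\chi:\mathcal{SP}_d\to\mathbb{Z}$ descends to a well-defined ring homomorphism $\bar\chi:\Pi^d\to\mathbb{Z}$, and then to split the resulting surjection via the canonical embedding of $\mathbb{Z}$ as integer multiples of the unit $1=\llbracket\{\mathbf{0}\}\rrbracket$.

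First I would verify that $\chi$ vanishes on the translation ideal $\mathcal{T}$. Since $\chi$ is a valuation and a polytope $P$ and its translate $P+t$ are both nonempty polytopes (or both empty), the characterization of $\chi$ gives $\chi([P+t])=\chi([P])$, so $\chi([P+t]-[P])=0$ for every generator of $\mathcal{T}$. Hence $\chi$ factors through the quotient $\Pi^d=\mathcal{SP}_d/\mathcal{T}$, yielding $\bar\chi:\Pi^d\to\mathbb{Z}$. A brief check that $\bar\chi$ is a ring map is needed: on generators, $\bar\chi(\llbracket P\rrbracket\ast\llbracket Q\rrbracket)=\chi([P+Q])=1=\chi(P)\chi(Q)$ when $P,Q$ are nonempty, so multiplicativity holds on the spanning set $\{\llbracket P\rrbracket : P\in\hat{\mathcal{P}}_d\}$ and extends by linearity.

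Next I would construct the splitting. The composition $\mathbb{Z}\xrightarrow{\iota}\Pi^d\xrightarrow{\bar\chi}\mathbb{Z}$, where $\iota(n)=n\cdot 1=n\llbracket\{\mathbf{0}\}\rrbracket$, is the identity on $\mathbb{Z}$ because $\bar\chi(1)=\chi(\{\mathbf{0}\})=1$. In particular $\bar\chi$ is surjective, and the short exact sequence
\[
0\longrightarrow\Pi_+\longrightarrow\Pi^d\xrightarrow{\bar\chi}\mathbb{Z}\longrightarrow 0
\]
splits. Concretely, for any $x\in\Pi^d$ write $x=\bar\chi(x)\cdot 1+(x-\bar\chi(x)\cdot 1)$; the first summand lies in $\iota(\mathbb{Z})$ and the second has $\bar\chi$-image zero, so it lies in $\Pi_+$. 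Since $\iota(\mathbb{Z})\cap\Pi_+=0$ (any $n\cdot 1$ in the kernel of $\bar\chi$ must satisfy $n=0$), this decomposition is unique, giving $\Pi^d=\mathbb{Z}\oplus\Pi_+$ as abelian groups.

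The only subtle point I anticipate is the well-definedness of $\bar\chi$ on $\mathcal{SP}_d$ in the first place: one needs that any relation $\sum\alpha_i[Q_i]=0$ among indicator functions is respected by $\chi$, which is guaranteed by the uniqueness statement in the Euler characteristic theorem (i.e.\ that $\chi$ is a genuine valuation on $\mathcal{SP}_d$, not merely defined on $\mathcal{P}_d$). With that in hand, everything reduces to the trivial observation that surjections of abelian groups onto $\mathbb{Z}$ always split, so no further structural work is required at this stage.
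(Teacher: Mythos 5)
Your proof is correct and uses the same central idea as the paper: the explicit decomposition $x=\chi(x)\cdot 1+\bigl(x-\chi(x)\cdot 1\bigr)$, which is precisely the paper's one-line argument. You supply additional (and welcome) justification that $\chi$ descends from $\mathcal{SP}_d$ to $\Pi^d$ and that the resulting splitting is a direct sum, but the underlying route is the same.
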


\begin{proof}
Decompose $x=\sum \alpha_i\llbracket P_i\rrbracket\in \Pi^d$ as 
\begin{equation}\label{eq:chi}
x=\sum \alpha_i\chi(P_i)\cdot 1+\sum \alpha_i\left(\llbracket P_i\rrbracket-1\right)=\chi(x)\cdot1+\left(x-\chi(x)\cdot1\right).
\end{equation}
\end{proof}

From Equation \ref{eq:chi} we have that $\Pi_+ = \mathbb{Z}\{\llbracket P\rrbracket - 1:\quad P\in\hat{{\mathcal{P}}}_d \}$. Proposition \ref{thm:importantid} in the polytope algebra means that all those generators are nilpotent. More precisely:

\begin{corollary}\label{cor:exp}
For $P\in \hat{{\mathcal{P}}}_d$ we have $\Big(\llbracket P\rrbracket -1\Big)^{r}=0$ in $\Pi^d$ for $r>d$. 
\end{corollary}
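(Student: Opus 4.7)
The plan is to apply Proposition \ref{thm:importantid} directly in the polytope algebra. For $P = \textrm{conv}\{v_1, \ldots, v_m\}$ the proposition gives $\prod_{i=1}^m ([P] - [v_i]) = 0$ in $\mathcal{SP}_d$. Passing to $\Pi^d = \mathcal{SP}_d / \mathcal{T}$, every single-point indicator collapses to the identity, $\llbracket v_i \rrbracket = \llbracket \{\textbf{0}\} \rrbracket = 1$, so the product descends to $(\llbracket P \rrbracket - 1)^m = 0$ in $\Pi^d$. This already proves the corollary whenever $m \leq d+1$, in particular for any $d$-simplex, where the claim is tight.

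For a $d$-polytope with $m > d+1$ vertices the bound just obtained is only $r \geq m$, which is weaker than $r > d$. The cleanest way to sharpen is through the dilation maps: one should show that $\lambda \mapsto \llbracket \lambda P \rrbracket = D_\lambda \llbracket P \rrbracket$ is a polynomial in $\lambda$ of degree at most $d$ with coefficients in $\Pi^d$. Taking its $(d+1)$-st finite difference at $\lambda = 0$ and using $\llbracket kP \rrbracket = \llbracket P \rrbracket^k$ for integer $k \geq 0$ would then yield
\[
0 \;=\; \sum_{k=0}^{d+1}\binom{d+1}{k}(-1)^{d+1-k}\llbracket kP \rrbracket \;=\; (\llbracket P \rrbracket - 1)^{d+1},
\]
by the binomial theorem.

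The main obstacle is therefore the polynomiality of $\llbracket \lambda P \rrbracket$ in $\lambda$, which is essentially the content of Theorem \ref{thm:structure} and is precisely where the dimension $d$ of the ambient space enters. An alternative, more elementary attempt would be to triangulate $P$ into $d$-simplices $\sigma_i$ and expand $\llbracket P \rrbracket - 1 = \sum_I (-1)^{|I|+1}(\llbracket \sigma_I \rrbracket - 1)$ by inclusion-exclusion, combined with the simplex case of the corollary applied to each $\sigma_I$; however, a naive multinomial expansion of $(\llbracket P \rrbracket - 1)^{d+1}$ does not kill the cross terms without extra relations among the $\llbracket \sigma_I \rrbracket$. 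For the immediate needs of the development (for instance, making sense of formal logarithms on $\Pi_+$), the unconditional nilpotency $(\llbracket P \rrbracket - 1)^m = 0$ coming directly from Proposition \ref{thm:importantid} already suffices, and the sharp bound $r > d$ can be harvested once Theorem \ref{thm:structure} is in place.
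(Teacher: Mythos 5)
Your overall strategy is the right one, and your finite-difference step is exactly how the paper extracts the conclusion once polynomiality of $n\mapsto\llbracket nP\rrbracket$ is known. But there are two problems with the proposal as written. First, the suggestion to ``harvest the sharp bound once Theorem~\ref{thm:structure} is in place'' is circular: Corollary~\ref{cor:exp} is one of the stepping stones used to prove Theorem~\ref{thm:structure} (it is what makes the logarithm a finite sum, which is needed to define the grading), so you cannot appeal to the structure theorem here.

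Second, your dismissal of the triangulation route is based on attacking the wrong quantity. You tried to expand $(\llbracket P\rrbracket-1)^{d+1}$ multinomially in the pieces $\llbracket\sigma_I\rrbracket-1$, where cross terms are indeed a nuisance. The paper instead triangulates and then \emph{dilates}: writing $\llbracket P\rrbracket=\sum_i\alpha_i\llbracket T_i\rrbracket$ with each $T_i$ a simplex of dimension at most $d$ and $\alpha_i\in\{\pm1\}$, the same inclusion-exclusion pattern scales, so $\llbracket nP\rrbracket=\sum_i\alpha_i\llbracket nT_i\rrbracket$. Now apply the binomial expansion $\llbracket nT_i\rrbracket=\sum_{k=0}^{d}\binom{n}{k}(\llbracket T_i\rrbracket-1)^k$ (valid because each $T_i$ has at most $d+1$ vertices, so higher powers vanish by Proposition~\ref{thm:importantid}) to each summand \emph{separately}. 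No products of different $T_i$'s ever appear; you only sum polynomials in $n$ of degree at most $d$, so $\llbracket nP\rrbracket$ itself is a polynomial in $n$ of degree at most $d$. Comparing with $\llbracket nP\rrbracket=\sum_{k\geq0}\binom{n}{k}(\llbracket P\rrbracket-1)^k$ and taking finite differences, as you did, forces $(\llbracket P\rrbracket-1)^r=0$ for $r>d$. So the missing ingredient is precisely the observation that dilation respects the triangulation, which turns your ``obstacle'' into a non-issue.
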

\begin{proof}
From Theorem \ref{thm:importantid} we know that $\Big(\llbracket P\rrbracket -1\Big)^{f_0(P)}=0$. Now we argue that we can lower the exponent. Notice that $\llbracket nP\rrbracket=\llbracket P+P\cdots+P\rrbracket=\llbracket P\rrbracket^n$ so we can write
\begin{equation}\label{eq:weakexp}
\displaystyle \llbracket nP\rrbracket=\left(1+\left(\llbracket P\rrbracket-1\right)\right)^n=\sum_{k=0}^n\binom{n}{k}\left(\llbracket P\rrbracket-1\right)^k.
\end{equation}
Every polytope can be triangulated which means we can always write, through inclusion-exclusion, $\llbracket P\rrbracket=\sum \alpha_i\llbracket T_i\rrbracket$, where each $T_i$ is a simplex of dimension $\leq d$ (and hence with at most $d+1$ vertices) and $\alpha_i\in\{-1,+1\}$. When dilating $P$ we can dilate each piece so that 
\begin{equation}\label{eq:poly}
\llbracket nP\rrbracket=\sum \alpha_i\llbracket nT_i\rrbracket.
\end{equation}
Expanding the right hand side with an analogous relation to Equation \ref{eq:weakexp} for each $T_i$, we get a polynomial in $n$ with coefficients in $\Pi^d$ of degree at most $d$, since all terms $\left(\llbracket T_i\rrbracket-1\right)^r$ vanish for $r>d$. This means that the right hand side of Equation \ref{eq:weakexp} is also a polynomial in $n$ of degree $\leq d$ and hence $\left(\llbracket P\rrbracket-1\right)^r$ vanish for $r>d$.
\end{proof}
\begin{theorem}\label{thm:vectorspace}
The abelian group $\Pi_+$ is a $\mathbb{Q}$-vector space.
\end{theorem}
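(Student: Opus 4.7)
The plan is to prove that multiplication by every positive integer $n$ is a bijection on $\Pi_+$, which together with the $\mathbb{Z}$-module structure already present forces $\Pi_+$ to be a $\mathbb{Q}$-vector space. The main tools are the dilation automorphism $D_n$ of $\Pi^d$ (whose inverse is $D_{1/n}$) together with the nilpotence of generators supplied by Corollary \ref{cor:exp}. The central identity, for a generator $x = \llbracket P \rrbracket - 1 \in \Pi_+$, is the binomial expansion
\begin{equation*}
D_n(x) \;=\; (1+x)^n - 1 \;=\; nx + \binom{n}{2}x^2 + \cdots + \binom{n}{d}x^d ,
\end{equation*}
from which the operator $\beta_n := D_n - n \cdot \mathrm{id}$ sends each generator of $\Pi_+$ into the ideal $\Pi_+^2$ of pairwise products, and hence $\beta_n(\Pi_+) \subseteq \Pi_+^2$ by $\mathbb{Z}$-linearity. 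An analogous expansion on a product $y_1 \cdots y_k \in \Pi_+^k$ of generators shows $D_n - n^k \cdot \mathrm{id}$ sends $\Pi_+^k$ into $\Pi_+^{k+1}$, so that $D_n$ acts as multiplication by $n^k$ on each graded quotient $\Pi_+^k/\Pi_+^{k+1}$.

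For injectivity, suppose $nx = 0$ with $x \in \Pi_+^k$; the base case is $k = 1$. Since $nx = 0$, we get $D_n(x) = \beta_n(x)$, and by the step-up property above $D_n(x) \equiv n^k x \pmod{\Pi_+^{k+1}}$. Applying the ring automorphism $D_{1/n}$, which preserves the filtration, gives $x \equiv n^k D_{1/n}(x) \pmod{\Pi_+^{k+1}}$. The key observation is $n \cdot D_{1/n}(x) = D_{1/n}(nx) = 0$, whence $n^k D_{1/n}(x) = 0$, so $x \in \Pi_+^{k+1}$. Iterating drives $x$ into every power of the filtration and, inside an appropriately chosen finitely generated nilpotent subring, forces $x = 0$.

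For surjectivity, given $x \in \Pi_+$, the first approximation $u_0 := D_{1/n}(x)$ satisfies $n u_0 = x - \beta_n(u_0)$ with remainder in $\Pi_+^2$; successive corrections inside a finitely generated commutative subring $R \subseteq \Pi^d$ containing $x$ together with the needed $D_{1/n^j}$-iterates produce $y$ with $ny = x$, provided the augmentation ideal $\mathfrak{m} := R \cap \Pi_+$ is nilpotent. Inside such an $R$ one proceeds by descending induction on filtration level: on each graded piece $\mathfrak{m}^k/\mathfrak{m}^{k+1}$ the operator $D_n$ has eigenvalue $n^k$, and the eigenvalues $n, n^2, \ldots, n^N$ are distinct and positive, so a Vandermonde-style argument inverts $n\cdot\mathrm{id}$ on the associated graded ring; these graded inverses then lift through the short exact sequences $0 \to \mathfrak{m}^{k+1} \to \mathfrak{m}^k \to \mathfrak{m}^k/\mathfrak{m}^{k+1} \to 0$ to yield a genuine $y \in \mathfrak{m} \subseteq \Pi_+$.

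The main obstacle, in both directions, is arranging for such a finitely generated nilpotent subring $R$ to exist. Individual generators are nilpotent by Corollary \ref{cor:exp}, but closing up under $D_{1/n^j}$ can in principle introduce infinitely many new generators, and nil ideals in commutative rings need not be nilpotent. The remedy is to retain only the finite number of iterates actually produced in executing the argument for the specific $x$ at hand and to reduce modulo an appropriately deep power of the augmentation ideal, after which both descending-chain analyses close and the bijectivity of $n \cdot \mathrm{id}$ reduces to finite-dimensional linear algebra on the associated graded ring, where the distinct positive eigenvalues of $D_n$ furnish the requisite $\mathbb{Q}$-linear structure.
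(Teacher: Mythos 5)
Your overall strategy (prove multiplication by $n$ is bijective on $\Pi_+$ using the dilation endomorphism $D_n$, its inverse $D_{1/n}$, and a descending filtration on which $D_n - n^k\cdot\mathrm{id}$ drops a level) is essentially the paper's strategy, and the key step-up identity $D_n x - n^k x \in (\text{next level})$ is exactly the paper's Equation \eqref{eq:lem}/\eqref{eq:strongexp}. However, there are two genuine gaps.

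First, for torsion-freeness you filter by powers $\Pi_+^k$ of the augmentation ideal, and you need $\bigcap_k \Pi_+^k = 0$. This is not available: $\Pi_+$ is a nil ideal (every $\llbracket P\rrbracket - 1$ is nilpotent by Corollary \ref{cor:exp}), but it is not finitely generated, and a non--finitely generated nil ideal in a commutative ring need not be nilpotent, so $\Pi_+^{d+1} = 0$ is not free --- indeed in the paper that fact is a \emph{consequence} of the grading, which itself depends on Theorem \ref{thm:vectorspace}, so invoking it here would be circular. Your proposed patch via a ``finitely generated nilpotent subring $R$'' does not close the gap: the descent step applies $D_{1/n}$, which leaves any fixed finitely generated subring, so $R$ must grow with each iteration and its nilpotence depth grows with it; moreover the step-up $D_n x - n^k x$ lands in $\Pi_+^{k+1}$ of the full algebra, not in $\mathfrak{m}^{k+1}$ for the subring you started with. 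The paper sidesteps all of this by filtering instead by $Z_r = \mathbb{Z}\{(\llbracket P\rrbracket -1)^j : j \ge r\}$, the subgroup generated by \emph{pure powers of single generators}. For this filtration $Z_{d+1}=0$ is immediate from Corollary \ref{cor:exp}, the step-up property still holds (it is checked on pure-power generators exactly as you do), and the descent closes after $d$ steps. So the right fix to your argument is to replace $\Pi_+^k$ by $Z_k$.

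Second, your surjectivity (divisibility) argument is not correct as sketched. The ``Vandermonde-style'' inversion of $D_n$ on the associated graded pieces would express $n\cdot\mathrm{id}$ as a $\mathbb{Q}$-linear (not $\mathbb{Z}$-linear) combination of powers of $D_n$, since the Lagrange interpolation denominators $n^i - n^j$ are not units in $\mathbb{Z}$; but producing rational multiples of elements is exactly what you are trying to justify, so the argument is circular. The paper's divisibility proof (Lemma \ref{lem:divisible}) is a one-line trick you should adopt: reduce to $m$ prime, pick $N = m^e > d+1$, and expand
\[
\llbracket P\rrbracket - 1 = \llbracket \tfrac{1}{N}P\rrbracket^N - 1 = \sum_{i=1}^{d}\binom{N}{i}\bigl(\llbracket \tfrac{1}{N}P\rrbracket - 1\bigr)^i.
\]
Since $N$ is a power of the prime $m$ and $1\le i \le d < N$, every binomial coefficient $\binom{N}{i}$ is divisible by $m$, which exhibits the desired $h$ with $mh = \llbracket P\rrbracket - 1$ explicitly; extend $\mathbb{Z}$-linearly to all of $\Pi_+$. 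This avoids any iteration, any choice of subring, and any rational coefficients.
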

This is a nontrivial statement. We need to make sense of $\frac{1}{m}x$ for $x\in\Pi_+$, more precisely, we need that for every $m\in\mathbb{Z}_{>0}$ there exists a unique $h\in\Pi_+$ with $x=m\cdot h$. We prove existence and uniqueness in two separate lemmas. Together they prove Theorem \ref{thm:vectorspace}.
\begin{lemma}\label{lem:divisible}
The abelian group $\Pi_+$ is divisible. For every $x\in \Pi_+$ and $m\in \mathbb{Z}_{>0}$, there exist $h\in \Pi_+$ such that $m\cdot h = f$. 
\end{lemma}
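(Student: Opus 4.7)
The plan is to reduce to algebra generators, use the dilation identity $\llbracket P\rrbracket=\llbracket mQ\rrbracket=(1+z)^m$ (with $Q=\frac{1}{m}P$) to solve the division equation modulo higher powers of $\Pi_+$, and then iterate down the filtration $\Pi_+\supseteq\Pi_+^2\supseteq\Pi_+^3\supseteq\cdots$ until the remainder vanishes. Since $\Pi_+$ is generated as an abelian group by the elements $\llbracket P\rrbracket-1$ for $P\in\hat{{\mathcal{P}}}_d$, it suffices to produce, for each such generator $y=\llbracket P\rrbracket-1$, an element $h\in\Pi_+$ with $mh=y$. Setting $Q=\frac{1}{m}P$ and $z=\llbracket Q\rrbracket-1\in\Pi_+$, the relation $\llbracket P\rrbracket=\llbracket Q\rrbracket^m=(1+z)^m$ combined with $z^{d+1}=0$ (Corollary \ref{cor:exp}) yields
\[
y \;=\; mz + \binom{m}{2}z^2+\binom{m}{3}z^3+\cdots+\binom{m}{d}z^d,
\]
so $y\equiv mz\pmod{\Pi_+^2}$, giving a first approximation $h^{(1)}:=z$ with remainder in $\Pi_+^2$.

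The very same trick now works at every filtration level. Given a generator $w=y_1\cdots y_\ell$ of $\Pi_+^\ell$ (with each $y_i=\llbracket P_i\rrbracket-1$), replace the first factor via $y_1=mz_1+\sum_{k\ge 2}\binom{m}{k}z_1^k$, where $z_1=\llbracket \frac{1}{m}P_1\rrbracket-1$. This rewrites $w=m\cdot(z_1y_2\cdots y_\ell)+w'$ with $z_1y_2\cdots y_\ell\in\Pi_+^\ell$ and $w'\in\Pi_+^{\ell+1}$. Extending by $\mathbb{Z}$-linearity, any element of $\Pi_+^\ell$ can be written as $mh+w'$ with $h\in\Pi_+^\ell$ and $w'\in\Pi_+^{\ell+1}$. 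Iterating on our original $y$, we inductively produce approximants $h^{(i)}\in\Pi_+^i$ satisfying $y-m\bigl(h^{(1)}+\cdots+h^{(\ell)}\bigr)\in\Pi_+^{\ell+1}$ for every $\ell\ge 1$.

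To close the argument the descending filtration $\{\Pi_+^k\}$ must vanish in finite time; the relevant fact is $\Pi_+^{d+1}=0$. This is a multilinear strengthening of Corollary \ref{cor:exp}, which by itself only controls the nilpotency of each single generator. It is obtained by applying that corollary to all Minkowski combinations $\sum c_iP_i$, using $\llbracket \sum c_iP_i\rrbracket=\prod (1+y_i)^{c_i}$, viewing both sides as polynomials in the non-negative integer variables $c_i$, and equating coefficients to extract the vanishing of each mixed product $y_1y_2\cdots y_{d+1}$. Once $\Pi_+^{d+1}=0$ is in hand, taking $\ell=d$ in the iteration shows $y=m\sum_{i=1}^d h^{(i)}$, so the required $h:=\sum_{i=1}^d h^{(i)}\in\Pi_+$ exists.

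The main obstacle is exactly this termination input: Corollary \ref{cor:exp} gives nilpotency of single generators, but the downward induction needs the stronger multilinear $\Pi_+^{d+1}=0$. Everything else, including the passage from the generator case to arbitrary $x\in\Pi_+$ and the level-by-level decomposition $w=mh+w'$, is a mechanical consequence of the dilation identity $(1+z)^m=1+mz+\binom{m}{2}z^2+\cdots$.
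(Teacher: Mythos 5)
Your overall strategy is genuinely different from the paper's, and it contains a gap at the one place you flag as ``the main obstacle.'' The paper's proof is a one-shot construction: it reduces to $m$ prime, picks $N=m^e>d+1$, and writes
\[
\llbracket P\rrbracket-1 \;=\; \bigl\llbracket \tfrac{1}{N}P\bigr\rrbracket^{N}-1 \;=\; \sum_{i=1}^{d}\binom{N}{i}\Bigl(\bigl\llbracket\tfrac{1}{N}P\bigr\rrbracket-1\Bigr)^{i},
\]
where the sum truncates at $d$ by Corollary \ref{cor:exp}. Since $m$ is prime and $1\le i\le d<N=m^e$, every $\binom{N}{i}$ is divisible by $m$, so the right-hand side is visibly $m$ times an element of $\Pi_+$. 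No filtration, no termination issue, and no input beyond Corollary \ref{cor:exp} is needed. Your iterative scheme along the powers $\Pi_+\supseteq\Pi_+^2\supseteq\cdots$ is a legitimate alternative in principle, but it forces you to prove $\Pi_+^{d+1}=0$ \emph{before} the graded decomposition is available (the grading is established only after divisibility and torsion-freeness), and that is exactly where your argument breaks.

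The gap: your polarization sketch does not yield $\Pi_+^{d+1}=0$. Expanding $\bigl(\llbracket\sum c_iP_i\rrbracket-1\bigr)^{d+1}=0$ and matching coefficients produces relations of the form $c\cdot y_1y_2\cdots y_{d+1}=0$ for some positive integer $c$, not $y_1\cdots y_{d+1}=0$ itself. Already for $d=1$: with $y_i=\llbracket P_i\rrbracket-1$ and $y_i^2=0$, one computes $\llbracket c_1P_1+c_2P_2\rrbracket-1=c_1y_1+c_2y_2+c_1c_2\,y_1y_2$, and squaring gives $2c_1c_2\,y_1y_2=0$; no choice of the $c_i$ extracts more than $2\,y_1y_2=0$. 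To pass from ``an integer multiple vanishes'' to actual vanishing you need $\Pi_+$ to be torsion-free, which is Lemma \ref{lem:torsion} --- proved separately in the paper, and not something you invoke. (It is true that the proof of Lemma \ref{lem:torsion} does not depend on divisibility, so one could in principle reorder: prove torsion-freeness first, then polarize to get $\Pi_+^{d+1}=0$, then run your iteration. But as written, your proposal silently assumes the multilinear vanishing, and the stated justification does not deliver it.) The paper's choice of $N=m^e$ with $m$ prime is precisely the device that makes all of this unnecessary.
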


\begin{remark}
It is important to keep in mind that if $P$ is a polytope, then $2\llbracket P\rrbracket$ is \textbf{not} equal to $\llbracket 2P \rrbracket$, the indicator of the second dilation of $P$. One quick way to remember this is to apply Euler characteristic: $\chi\left(2\llbracket P\rrbracket\right)=2$ whereas $\chi\left(\llbracket 2P\rrbracket\right)=1$.
\end{remark}

\begin{proof}
The following proof is indirect and dry. To see how to actually divide see Example \ref{ex:divide}.
It is enough to show the result for $m>1$ prime. Consider $N=m^e>d+1$ a large power of $m$. Then we have
\[
\llbracket P\rrbracket - 1 = \llbracket \frac{1}{N}P\rrbracket^N - 1 = \sum_{i=1}^d\binom{N}{i}\left(\llbracket \frac{1}{N}P\rrbracket-1\right)^i.
\]
Now since $m$ is prime and $N$ is a power of $m$, we have that $m$ divides all the binomial coefficients.
\end{proof}

\begin{example}
Notice that in the real line it is straighforward to divide half open segments, since $m\cdot [0,r/m) = [0,r)$ up to translation.
\end{example}

\begin{example}\label{ex:divide}
Now sketch an example in two dimensions. Again we will not divide a whole polytope, but a polytope minus a point. Since all points are equivalent under translation, we choose to remove a vertex.
\begin{figure}[h]
\centering
\includegraphics[scale=0.8]{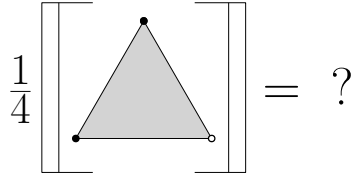}
\caption{We need to find an integer combination of polytopes $h$ such that $\llbracket P \rrbracket=4h$}
\end{figure}\\
The idea now is that we are going to decompose that simplex in a convenient way. Along the way we will get 4 copies of its $\frac{1}{4}$-dilation, as may be expected, but we also get a number of products of simplices of strictly smaller dimensions. Then by induction in dimension we can divide each of them. See Figure \ref{fig:exdiv}.
\begin{figure}[h]
\centering
\includegraphics[scale=0.8]{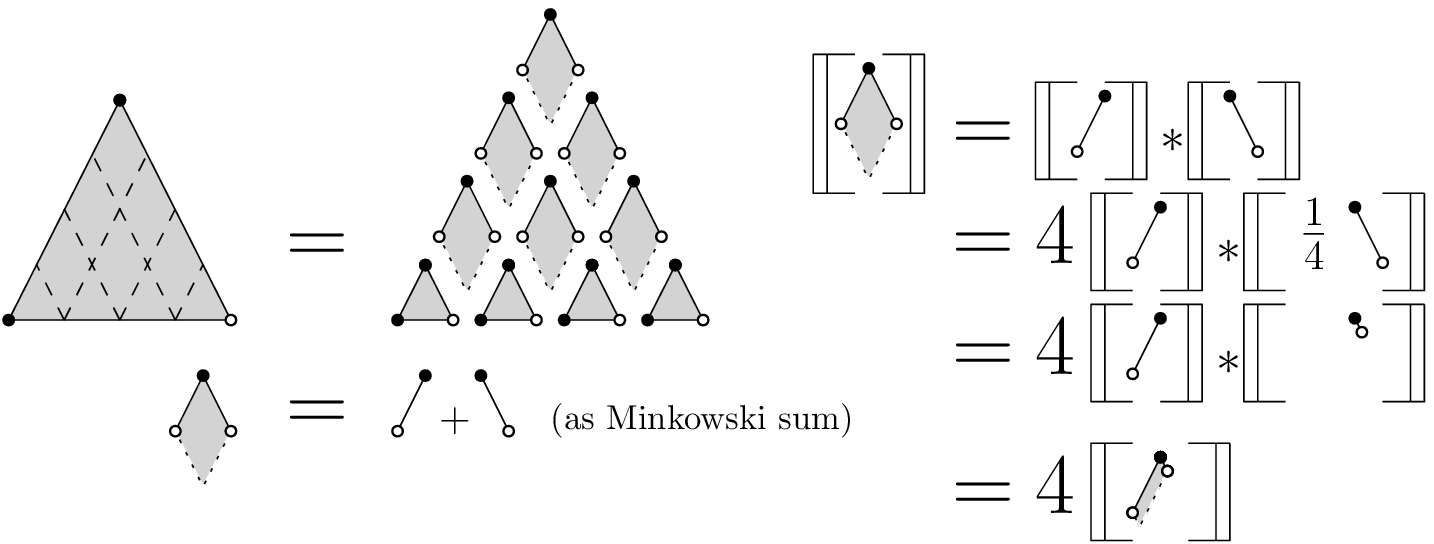}
\includegraphics[scale=0.8]{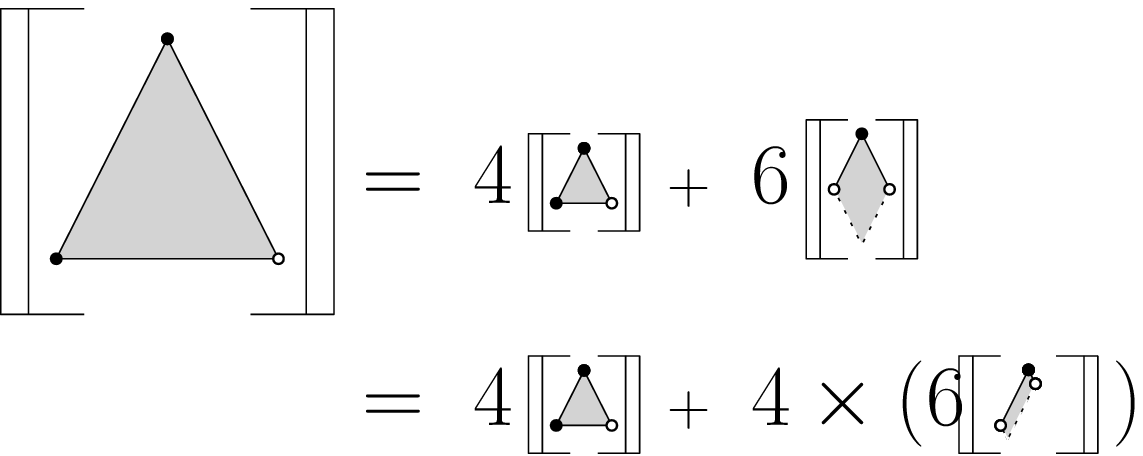}
\caption{An example of division by 4.}\label{fig:exdiv}
\end{figure}
\end{example}

\begin{lemma}\label{lem:torsion}
The abelian group $\Pi_+$ has no torsion elements.
\end{lemma}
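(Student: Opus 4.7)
My plan is to combine the dilation endomorphisms $D_n$ with the augmentation filtration $F^r := \Pi_+^r$, which terminates at $F^{d+1} = 0$ by Corollary \ref{cor:exp}, and to induct on the dimension $d$. Suppose for contradiction that $mx = 0$ with $0 \neq x \in \Pi_+$ and $m \in \mathbb{Z}_{>0}$. Choosing $r$ maximal with $x \in F^r$, the image $\bar{x} \in F^r / F^{r+1}$ is nonzero and still annihilated by $m$, so it suffices to show every associated graded piece $F^r/F^{r+1}$ is torsion-free.

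A first, purely formal ingredient is that $D_n$ acts on $F^r/F^{r+1}$ as multiplication by $n^r$. Indeed, expanding
\[
D_n(\llbracket P \rrbracket - 1) = \sum_{k=1}^{d} \binom{n}{k}(\llbracket P \rrbracket - 1)^k \equiv n\,(\llbracket P \rrbracket - 1) \pmod{F^2},
\]
and using multiplicativity of $D_n$ together with the fact that $F^r$ is generated by $r$-fold products of elements of $F^1$, one obtains $D_n v \equiv n^r v \pmod{F^{r+1}}$ for every $v \in F^r$. As a consequence, $D_n$ satisfies the polynomial identity $\prod_{k=1}^{d}(D_n - n^k\,\mathrm{id}) = 0$ on all of $\Pi_+$, which provides a clean handle on eigen-decompositions over $\mathbb{Q}$.

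The base case $d=1$ is covered by the earlier explicit description $\Pi^1 \cong \mathbb{Z} \oplus \mathbb{R}$, making $\Pi_+ \cong \mathbb{R}$ manifestly torsion-free. For the inductive step, the top graded piece $F^d$ can be handled via the mixed-volume map
\[
\llbracket P_1 \rrbracket * \cdots * \llbracket P_d \rrbracket \;\longmapsto\; d!\,V(P_1,\ldots, P_d) \in \mathbb{R},
\]
which is well-defined on $F^d$ because mixed volume is translation-invariant and multilinear in Minkowski sum, and whose codomain is torsion-free. For lower pieces $F^r/F^{r+1}$ with $r<d$, I would push elements up into $F^d$ by Minkowski-multiplying with $(d-r)$-fold products in $F^{d-r}$, thereby reducing to the top-piece case.

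The main obstacle is exactly this last reduction: promoting \emph{torsion-free at the top} to \emph{torsion-free in every graded piece} requires the Minkowski product $F^r \times F^{d-r} \to F^d$ to detect every nonzero torsion class on the left. This is essentially a Poincar\'e-duality--flavored nondegeneracy statement, and proving it rigorously---without circularity either with the divisibility already established in Lemma \ref{lem:divisible} or with the $\mathbb{Q}$-vector space structure we are attempting to build---is the subtle technical heart of McMullen's argument, and where the bulk of the real work would be concentrated.
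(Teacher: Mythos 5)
You have already assembled the decisive ingredient, namely the congruence
\[
D_n v \equiv n^r v \pmod{F^{r+1}} \qquad \text{for } v \in F^r ,
\]
together with the facts that each $D_\lambda$ is a ring endomorphism preserving the filtration and that $D_n$ is invertible with inverse $D_{1/n}$. But you then set it aside and attempt to prove torsion-freeness of each graded piece through a mixed-volume / Poincar\'e-duality argument, which (as you honestly note) you cannot close. That detour is unnecessary and is not what the paper does. The congruence alone finishes the proof: if $n x = 0$ with $x \in F^r$, then also $n^r x = 0$, so
\[
D_n x \;=\; D_n x - n^r x \;\in\; F^{r+1},
\]
and since $D_{1/n}$ preserves the filtration, $x = D_{1/n}\bigl(D_n x\bigr) \in F^{r+1}$. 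Iterating pushes $x$ into every $F^j$, hence into the zero term of the filtration, so $x = 0$. There is no need to prove that each associated graded piece is torsion-free, let alone any nondegeneracy of the pairing $F^r \times F^{d-r} \to F^d$. In short, your ``first, purely formal ingredient'' is the whole proof; the ``main obstacle'' you describe is a problem you created for yourself.

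One technical caveat worth flagging: you take $F^r := \Pi_+^r$ and assert $F^{d+1} = 0$ ``by Corollary~\ref{cor:exp}.'' That corollary only gives $(\llbracket P\rrbracket - 1)^j = 0$ for $j > d$; it does not by itself kill an arbitrary product $(\llbracket P_1\rrbracket - 1)\cdots(\llbracket P_{d+1}\rrbracket - 1)$ with distinct $P_i$, and the polarization identity that would reduce the latter to pure powers introduces denominators you cannot yet divide by (divisibility of $\Pi_+$ is Lemma~\ref{lem:divisible}, a separate statement). The paper avoids this by filtering instead by the subgroups $Z_r := \mathbb{Z}\{(\llbracket P\rrbracket - 1)^j : j \ge r\}$, for which $Z_{d+1}=0$ is immediate from Corollary~\ref{cor:exp}, and for which the two needed observations, $D_\lambda Z_r \subseteq Z_r$ and $D_n x - n^r x \in Z_{r+1}$ for $x\in Z_r$, can be checked on the generators.
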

\begin{proof}
Consider the following filtration $\Pi_+=Z_1\supset Z_2\supset\cdots\supset Z_d\supset Z_{d+1}$ where $Z_r$ is generated by elements of the form $\left(\llbracket P\rrbracket-1\right)^j$ for $j\geq r$. The proof of the lemma follows from two observations.

The first one is that $D_\lambda\left(\llbracket P\rrbracket-1\right)^r=\left(\llbracket \lambda P\rrbracket-1\right)^r$, since $D_\lambda$ is a ring endomorphism, so it commutes with taking powers. This implies that $D_\lambda Z_r\subset Z_r$ and $Z_r\subset D_{\lambda^{-1}}Z_r$.

The second observation is that if $x\in Z_r$, then 
\begin{equation}\label{eq:lem}
D_nx-n^rx\in Z_{r+1},
\end{equation}
 for $n$ a natural number. It is enough to check it on the generators. We apply Equation \ref{eq:weakexp} 
\begin{equation}\label{eq:strongexp}
D_n\left(\llbracket P\rrbracket-1\right)=\left(\llbracket nP\rrbracket-1\right)= \binom{n}{1}\left(\llbracket P\rrbracket-1\right)+\binom{n}{2}\left(\llbracket P\rrbracket-1\right)^2+\cdots.
\end{equation}
Raising the above expression to the $r$ power we get that $D_n\left(\llbracket P\rrbracket-1\right)^r-n^r\left(\llbracket P\rrbracket-1\right)^r\in Z_{r+1}$. Since it is true for the generators then Equation \ref{eq:lem} holds for all $Z_r$.

Now we finish the proof of the lemma. Let $x\in Z_r$ for some $r\geq1$, and $nx=0$ for a natural $n$. Then $D_nx=D_nx-n^rx$ since $nx=0$ and hence $D_nx\in Z_{r+1}$ by \ref{eq:lem} and also $x\in D_{n^{-1}}Z_{r+1}\subset Z_{r+1}$. This implies that $x\in Z_j$ for $j>>0$, but since they are eventually zero, $x$ must be zero.

\end{proof}

Combining Lemmas \ref{lem:torsion} and \ref{lem:divisible}, we get Theorem \ref{thm:vectorspace}.
Now that we can make sense of \emph{rational multiples} of element in $z \in\Pi_+$, we can define the exponential and the logarithm as formal power series with coefficients in $\mathbb{Q}$.
\[
\displaystyle \exp(z):= \sum_{k\geq 0} \frac{1}{k!} z^k,\qquad \log(1+z) = \sum_{k\geq 1} \frac{(-1)^{k-1}}{k} z^k.
\]
The usual identities formally apply
\begin{align*}
\log \exp (z) = \exp \log (z) = z,\\
\exp(a+b) = \exp(a)\cdot \exp(b),\\
\log(a\cdot b)= \log(a) + \log(b).\\
\end{align*}

With this in mind, we can define
\[
\displaystyle \log(\llbracket P\rrbracket) = \log(1+\left(\llbracket P\rrbracket-1\right)) = \sum_{k\geq 1}^d \frac{(-1)^{k-1}}{k}\left(\llbracket P\rrbracket-1\right)^k,
\]
which is a finite sum since $\left(\llbracket P\rrbracket-1\right)^i=0, i>d$. 

\begin{theorem}\label{thm:polynomiality}
For $P\in{\mathcal{P}}_d$, define $p:=\log\left(\llbracket P\rrbracket\right)$, then 
\begin{equation}\label{eq:polyid}
\llbracket P\rrbracket^n=\llbracket nP\rrbracket = 1+pn+\frac{1}{2}p^2n^2+\cdots+\frac{1}{d!}p^dn^d.
\end{equation}

\end{theorem}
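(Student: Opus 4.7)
The plan is to establish $\exp(p)=\llbracket P\rrbracket$, then exploit the formal identity $\exp(p)^n=\exp(np)$, and truncate because $p$ is nilpotent. As a first step, I would verify that $p^{d+1}=0$. Setting $q:=\llbracket P\rrbracket-1$, the series defining $\log$ truncates to $p=\sum_{k=1}^{d}\frac{(-1)^{k-1}}{k}q^{k}$, a polynomial in $q$ with zero constant term. Hence any power $p^{N}$ is a $\mathbb{Q}$-linear combination of monomials $q^{j}$ with $j\geq N$, and by Corollary \ref{cor:exp} every such $q^{j}$ vanishes as soon as $j\geq d+1$. The rational scalars are legitimate: $q\in\Pi_+$, the set $\Pi_+$ is closed under multiplication (being $\ker\chi$), so $q^{k}\in\Pi_+$ for $k\geq 1$, and $\Pi_+$ is a $\mathbb{Q}$-vector space by Theorem \ref{thm:vectorspace}.

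Next I would import the two universal identities
\[
\exp(\log(1+z))=1+z,\qquad \exp(a)\exp(b)=\exp(a+b),
\]
which hold in $\mathbb{Q}[[z]]$ and $\mathbb{Q}[[a,b]]$ respectively, as equalities modulo $z^{d+1}$ and modulo the analogous truncation for $(a,b)$. Once $q$, $p$, and $np$ are nilpotent of order $\leq d+1$, both sides of either identity reduce to finite polynomial expressions after substitution, so the identities descend verbatim to $\Pi^{d}$. Applying the first with $z=q$ yields $\exp(p)=\llbracket P\rrbracket$; iterating the second with $a=b=p$ gives $\llbracket P\rrbracket^{n}=\exp(p)^{n}=\exp(np)$.

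Finally, the definition of the product in $\Pi^{d}$ gives $\llbracket P\rrbracket^{n}=\llbracket nP\rrbracket$, so expanding $\exp(np)$ and truncating at $k=d$ (since $p^{d+1}=0$) delivers
\[
\llbracket nP\rrbracket=\sum_{k=0}^{d}\frac{(np)^{k}}{k!}=\sum_{k=0}^{d}\frac{p^{k} n^{k}}{k!},
\]
which is the claimed identity. The main conceptual hurdle is the nilpotence of $p$; once that is in hand the rest is bookkeeping, transplanting power-series identities from $\mathbb{Q}[[x]]$ into the nilpotent ideal $\Pi_+$, which has been upgraded to a $\mathbb{Q}$-vector space precisely so that such manipulations are legal.
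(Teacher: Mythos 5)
Your proposal is correct and follows essentially the same route as the paper: identify $\exp$ and $\log$ as mutually inverse truncated power series in the nilpotent ideal $\Pi_+$, use $\llbracket P\rrbracket^n=\llbracket nP\rrbracket$, and convert to $\exp(np)$, then truncate at degree $d$. The paper's two-line proof silently relies on exactly the facts you spell out (nilpotence of $p$, the $\mathbb{Q}$-vector-space structure of $\Pi_+$, and the transfer of formal power-series identities), and the only cosmetic difference is that you invoke $\exp(a)\exp(b)=\exp(a+b)$ where the paper uses $\log(ab)=\log a+\log b$.
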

\begin{proof}
We simply manipulate our expressions:
\begin{equation}\label{eq:preehr}
\llbracket nP\rrbracket = \llbracket P\rrbracket^n = \exp\left(\log \llbracket P\rrbracket^n\right)= \exp\left(np\right)= \sum_{i=0}^d \frac{1}{i!}p^in^i
\end{equation}
\end{proof}
A consequence of Theorem \ref{thm:polynomiality} is that for $n=1$:
\begin{equation}\label{eq:graded}
\llbracket P\rrbracket = 1+p+\frac{1}{2}p^2+\cdots+\frac{1}{d!}p^d.
\end{equation}
which is going to be our graded decomposition.

\begin{definition}
Define $\Pi_k:= \mathbb{Z}\{\frac{1}{k!}\log\left(\llbracket P\rrbracket\right)^k\quad P\in{\mathcal{P}}_d\}$.
\end{definition}
With this we now have $\Pi = \Pi_0+\Pi_1+\cdots +\Pi_d$. However we want this sum to be direct. For this we shall use our dilation maps. We have:
\[
D_r(\log\llbracket P\rrbracket) = \log D_r(\llbracket P\rrbracket) = \log (\llbracket rP\rrbracket)=\log (\llbracket P\rrbracket^r) = r\log(\llbracket P\rrbracket).
\]
Moreover we have
\[
x\in\Pi_k \Longleftrightarrow D_rx=r^kx\quad r\in\mathbb{Q}_{>0}.
\]

\begin{proposition}
We have $\Pi = \Pi_0\oplus\Pi_1\oplus\cdots \oplus\Pi_d$ as a direct sum.
\end{proposition}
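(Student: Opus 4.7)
The plan is to show that any relation $\sum_{k=0}^d x_k = 0$ with $x_k \in \Pi_k$ forces every $x_k$ to vanish. My strategy is to peel off the degree-zero component using the earlier decomposition $\Pi^d = \mathbb{Z} \oplus \Pi_+$, and then distinguish the remaining graded pieces by their scaling behavior under the dilation operators $D_r$, via a Vandermonde argument that exploits the $\mathbb{Q}$-vector space structure of $\Pi_+$ guaranteed by Theorem~\ref{thm:vectorspace}.

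First I would dispose of $x_0$. Since $\Pi_0 = \mathbb{Z}\cdot 1$, whereas each generator $\tfrac{1}{k!}(\log\llbracket P\rrbracket)^k$ of $\Pi_k$ (for $k\geq 1$) is a $\mathbb{Q}$-combination of powers $(\llbracket P\rrbracket-1)^j$ with $j\geq 1$, we have $\Pi_k\subseteq \Pi_+$ for every $k\geq 1$. The direct-sum decomposition $\Pi^d=\mathbb{Z}\oplus\Pi_+$ therefore already forces $x_0=0$, reducing us to the relation $x_1+\cdots+x_d=0$ inside $\Pi_+$.

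Next I would apply the dilations. Because $D_r$ is a ring endomorphism and $D_r(\log\llbracket P\rrbracket)=r\log\llbracket P\rrbracket$, it acts on each generator of $\Pi_k$ by multiplication by $r^k$, so $D_rx_k=r^kx_k$ for every $x_k\in\Pi_k$. Feeding $d$ distinct positive rationals $r_1,\ldots,r_d$ into the relation $x_1+\cdots+x_d=0$ then produces the Vandermonde-type system
\[
\sum_{k=1}^d r_i^k\, x_k=0,\qquad i=1,\ldots,d,
\]
whose coefficient matrix has determinant $r_1\cdots r_d\prod_{i<j}(r_j-r_i)\neq 0$. Inverting this matrix over $\mathbb{Q}$ yields $x_k=0$ for each $k\geq 1$.

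The only delicate step is being allowed to invert the Vandermonde matrix with $\mathbb{Q}$-coefficients acting on the $x_k$; this would collapse if $\Pi_+$ carried any $\mathbb{Z}$-torsion or failed to be divisible. The whole substance of the argument is therefore packaged into Theorem~\ref{thm:vectorspace}, and once that is in hand, the proof of the direct-sum decomposition is purely formal.
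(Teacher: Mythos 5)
Your proof is correct and takes essentially the same route as the paper: apply the dilations $D_r$, use that $D_r$ scales $\Pi_k$ by $r^k$, and extract each $x_k$ by a Vandermonde argument (the paper writes this more tersely as ``$x_0+x_1N+\cdots+x_dN^d=0$ for all $N>0$, hence $x_i=0$''). Your version is a bit more careful in two minor ways — you first dispose of $x_0$ via the decomposition $\Pi^d=\mathbb{Z}\oplus\Pi_+$, and you explicitly flag that inverting the Vandermonde matrix over $\mathbb{Q}$ relies on Theorem~\ref{thm:vectorspace} — but these are refinements of the same argument, not a different proof.
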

\begin{proof}
Suppose there exist $x_i\in\Pi_i$ with $x_0+x_1+\cdots+x_d=0$. From this, we conclude $D_N(x_0+x_1+\cdots+x_d)=0$ which is the same as $x_0+x_1N^1+x_2N^2+\cdots x_dN^d=0$. This last equality is true for all $N>0$ hence we can conclude that $x_i=0.$
\end{proof}

\begin{proposition}
The decomposition $\Pi = \Pi_0\oplus\Pi_1\oplus\cdots \oplus\Pi_d$ gives a standard grading.
\end{proposition}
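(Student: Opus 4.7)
The statement amounts to showing $\Pi_i \cdot \Pi_j \subseteq \Pi_{i+j}$ for all $i,j$ (and, to match the phrasing of Theorem \ref{thm:structure}, that the algebra is generated in degree one). My plan is to exploit the characterization proved immediately before the proposition,
\[
x \in \Pi_k \iff D_r x = r^k x \text{ for all } r \in \mathbb{Q}_{>0},
\]
which reduces the notion of homogeneity to a simple eigenvalue condition for the dilation endomorphisms.

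For the compatibility of the grading with multiplication, the argument is essentially one line. Given $x \in \Pi_i$ and $y \in \Pi_j$, the ring-endomorphism property of $D_r$ yields
\[
D_r(xy) \;=\; D_r(x)\, D_r(y) \;=\; (r^i x)(r^j y) \;=\; r^{i+j}(xy)
\]
for every $r \in \mathbb{Q}_{>0}$, and the characterization above immediately places $xy$ in $\Pi_{i+j}$.

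For the generated-in-degree-one aspect, I would appeal to the identity \eqref{eq:graded}: for any polytope $P$,
\[
\llbracket P \rrbracket \;=\; 1 + p + \tfrac{1}{2} p^2 + \cdots + \tfrac{1}{d!} p^d, \qquad p = \log \llbracket P \rrbracket \in \Pi_1,
\]
where the rational coefficients are legitimate because $\Pi_+$ is a $\mathbb{Q}$-vector space by Theorem \ref{thm:vectorspace}. Since $\Pi$ is spanned additively by the classes $\llbracket P \rrbracket$, every element lies in the subalgebra generated by $\Pi_1$ over $\Pi_0 = \mathbb{Z}$.

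The main obstacle is essentially invisible: all the nontrivial work has been done in constructing $\log$, in proving divisibility and torsion-freeness of $\Pi_+$, and in establishing the direct-sum decomposition of the preceding proposition. The one point that deserves explicit care is the ``$\Longleftarrow$'' direction of the dilation characterization, which I would justify by writing an element $x$ in its components $x = \sum_j x_j$ with $x_j \in \Pi_j$, comparing $D_r x = \sum_j r^j x_j$ with $r^k x = \sum_j r^k x_j$, and running a short Vandermonde argument over finitely many rational values of $r$ to force $x_j = 0$ for $j \neq k$. Once that is in hand, the proposition falls out immediately.
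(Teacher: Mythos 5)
Your argument for $\Pi_i \cdot \Pi_j \subseteq \Pi_{i+j}$ is exactly the paper's one-line proof: use that $D_r$ is a ring endomorphism together with the eigenvalue characterization of the graded pieces. You go slightly beyond the paper by also explicitly addressing generation in degree one via Equation \eqref{eq:graded} and by flagging the Vandermonde argument needed for the ``$\Longleftarrow$'' direction of the characterization, both of which the paper implicitly defers to surrounding material; your treatment is correct and a bit more complete than what appears in the text.
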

\begin{proof}
What we need to prove is that $x\in \Pi_i, y\in\Pi_j$ imply $x\ast y\in\Pi_{i+j}$. This follows from the fact that $D_r$ is a ring map,
\[
D_r(x\ast y)=D_r(x)\ast D_r(y) = r^ix\ast r^jy = r^{i+j}(x\ast y).
\]
\end{proof}

\begin{corollary}\label{cor:homog}
For any $\phi:{\mathcal{P}}_d\longrightarrow G$ translation invariant valuation, $\phi$ is homogeneous of degree $k$ (i.e., $\phi(nP)=n^k$ for $P\in{\mathcal{P}}_d$) if and only if $\phi(\Pi_j)=0$ for $j\neq k$. Also, for any translation invariant valuation $\phi$, homogeneous or not, we can uniquely decompose it $\phi=\phi_0+\phi_1+\cdots+\phi_d$ in homogeneous parts.
\end{corollary}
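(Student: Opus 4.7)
The plan is to use translation invariance to pass to the polytope algebra and then read off homogeneity directly from the graded decomposition via the polynomial identity in Theorem \ref{thm:polynomiality}.

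First I would observe that, since $\phi$ is translation invariant, it kills the ideal $\mathcal{T}$ and so factors through a group homomorphism $\Pi^d \to G$, which I continue to denote by $\phi$; thus $\phi(P) = \phi(\llbracket P\rrbracket)$. By the very definition of $\Pi_j$, this subgroup is generated (as a $\mathbb{Z}$-module) by the elements $\frac{1}{j!}p^j$ as $P$ ranges over ${\mathcal{P}}_d$, where $p := \log\llbracket P\rrbracket$.

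For the equivalence, fix $P \in {\mathcal{P}}_d$ and apply Theorem \ref{thm:polynomiality} to obtain
\[
\phi(nP) \;=\; \phi(\llbracket nP \rrbracket) \;=\; \sum_{j=0}^{d} n^j \,\phi\!\left(\tfrac{1}{j!}p^j\right),
\]
a polynomial in $n$ with coefficients in $G$. If $\phi$ is homogeneous of degree $k$, then $\phi(nP) = n^k\phi(P)$ is an equality of polynomials in $n$; matching coefficients forces $\phi(\frac{1}{j!}p^j) = 0$ for every $j \neq k$. Letting $P$ vary and using that these elements generate $\Pi_j$ yields $\phi(\Pi_j) = 0$. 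Conversely, if $\phi(\Pi_j) = 0$ for all $j \neq k$, then only the $j=k$ summand in the polynomial survives, so $\phi(nP) = n^k\,\phi(\frac{1}{k!}p^k)$; specialising at $n=1$ identifies $\phi(P) = \phi(\frac{1}{k!}p^k)$ via the graded expansion of $\llbracket P\rrbracket$ in Equation \eqref{eq:graded}, and we recover $\phi(nP) = n^k \phi(P)$.

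For the decomposition, let $\pi_k: \Pi^d \to \Pi_k$ be the projection provided by the direct-sum splitting and set $\phi_k := \phi \circ \pi_k$. Each $\phi_k$ vanishes on $\Pi_j$ for $j \neq k$, hence is homogeneous of degree $k$ by the first part, and the identity $\pi_0 + \cdots + \pi_d = \mathrm{id}_{\Pi^d}$ gives $\phi = \phi_0 + \cdots + \phi_d$. For uniqueness, suppose $\phi = \sum_k \psi_k$ is another decomposition into homogeneous pieces; the first part forces each $\psi_k$ to vanish off $\Pi_k$, so restricting to $\Pi_j$ collapses the sum to $\psi_j|_{\Pi_j} = \phi|_{\Pi_j} = \phi_j|_{\Pi_j}$, which determines each $\psi_k$ on all of $\Pi^d$.

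The hard part, such as it is, is purely notational: one must keep in mind that the $\frac{1}{j!}p^j$ are not just examples of elements in $\Pi_j$ but form a generating set, so that coefficient-matching of the polynomial actually yields vanishing on the full graded piece. Past this, the corollary is a mechanical consequence of the graded structure already established.
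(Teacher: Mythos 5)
The paper states this corollary without proof, treating it as an immediate consequence of the graded decomposition, so there is no in-paper argument to compare against; your proof supplies the natural one. The overall structure is correct: translation invariance lets $\phi$ descend to $\Pi^d$; the identity $\llbracket nP\rrbracket=\sum_{j=0}^d n^j\,\tfrac{1}{j!}p^j$ from Theorem~\ref{thm:polynomiality} turns $\phi(nP)$ into a polynomial in $n$ whose $j$-th coefficient, as $P$ varies, ranges over a generating set of $\phi(\Pi_j)$; and the decomposition $\phi_k=\phi\circ\pi_k$ together with $\pi_0+\cdots+\pi_d=\mathrm{id}$ handles both existence and, via the first part, uniqueness.

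The one step worth flagging is ``matching coefficients.'' For an arbitrary abelian group $G$, equality of the functions $n\mapsto\sum_j n^j c_j$ and $n\mapsto n^k\sum_j c_j$ on positive integers $n$ yields, via an integer Vandermonde-type system, only that $N\cdot c_j=0$ for some nonzero integer $N$ (a determinant), not $c_j=0$. Concretely, for $d=2$ and $k=0$ the coefficient vectors $(n-1,n^2-1)$, $n\ge 2$, span a sublattice of index $2$ in $\mathbb{Z}^2$, so $2$-torsion in $G$ escapes the bare Vandermonde argument. This is harmless whenever $G$ is torsion-free, which covers every application the paper makes of this corollary ($G=\mathbb{R}$ for volume and mixed volumes, $G=\mathbb{Z}$ for the Ehrhart count), but to obtain the statement for arbitrary $G$ you would need an additional ingredient, for instance exploiting that the polynomial identity holds simultaneously for all rational dilates $\tfrac{1}{m}P$ and that $\Pi_j$ is a $\mathbb{Q}$-vector space for $j\ge 1$. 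The converse direction, the definition of $\phi_k=\phi\circ\pi_k$, and the uniqueness argument are clean and do not depend on this point.
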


The volume is the \emph{unique} (up to a multiple) translation invariant valuation of degree $d$ on ${\mathcal{P}}_d$ (See \cite[Section 7]{main}).

\begin{corollary}\label{cor:volume}
The volume valuation induces an isomorphism $\textrm{Vol}_d: \Pi_d\to \mathbb{R}$.
\end{corollary}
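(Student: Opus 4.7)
First I would check that $\textrm{Vol}_d$ is a well-defined map on $\Pi_d$. Lebesgue volume $\textrm{vol}:{\mathcal{P}}_d\to\mathbb{R}$ is a translation-invariant valuation, so it descends to a group homomorphism $\textrm{Vol}:\Pi^d\to\mathbb{R}$. Since $\textrm{vol}(nP)=n^d\textrm{vol}(P)$, this valuation is homogeneous of degree $d$, and Corollary \ref{cor:homog} forces it to vanish on every $\Pi_j$ with $j\neq d$. Restricting to the top component therefore defines the desired map $\textrm{Vol}_d:\Pi_d\to\mathbb{R}$.

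Next I would establish surjectivity. Pick any $d$-dimensional polytope $P$ and set $p=\log\llbracket P\rrbracket$. Applying $\textrm{Vol}$ to the graded decomposition of Equation \ref{eq:graded} and using that all lower-degree components lie in $\ker\textrm{Vol}$ gives
\[
\textrm{vol}(P)=\textrm{Vol}(\llbracket P\rrbracket)=\textrm{Vol}_d\!\left(\tfrac{1}{d!}\,p^{\,d}\right).
\]
As $P$ varies over $d$-polytopes, $\textrm{vol}(P)$ takes every positive real value (e.g.\ dilate a unit cube by $D_r$ to realise $r^d$), so the image of $\textrm{Vol}_d$ contains $\mathbb{R}_{>0}$, and being a subgroup of $\mathbb{R}$, it must be all of $\mathbb{R}$.

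The main obstacle is injectivity, and for that I would lean on the uniqueness statement quoted just before the corollary: every translation-invariant degree-$d$ valuation on ${\mathcal{P}}_d$ is a scalar multiple of volume. By Corollary \ref{cor:homog} the data of such a valuation is exactly a group homomorphism $\Pi_d\to\mathbb{R}$, so every such homomorphism $\phi$ has the form $\phi=c\cdot\textrm{Vol}_d$ for some $c\in\mathbb{R}$. Now suppose $x\in\ker\textrm{Vol}_d$. By Theorem \ref{thm:vectorspace} the group $\Pi_d\subset \Pi_+$ is a $\mathbb{Q}$-vector space, so if $x\neq 0$ we could enlarge $\{x\}$ to a $\mathbb{Q}$-basis of $\Pi_d$ and take the associated coordinate functional to produce a group homomorphism $\phi:\Pi_d\to\mathbb{Q}\subset\mathbb{R}$ with $\phi(x)\neq 0$. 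Since $\phi$ must be $c\cdot \textrm{Vol}_d$, this contradicts $\textrm{Vol}_d(x)=0$. Hence $x=0$, $\textrm{Vol}_d$ is injective, and combined with surjectivity it is the claimed isomorphism. The delicate point in this plan is the passage from the cited uniqueness to honest injectivity of $\textrm{Vol}_d$, which is why the existence of enough group homomorphisms out of a $\mathbb{Q}$-vector space is the crucial ingredient.
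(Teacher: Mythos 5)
The paper itself does not prove this corollary; it simply states the uniqueness of volume as a result cited from McMullen's Section~7 and then records the corollary without argument. Your plan, by contrast, tries to \emph{derive} the corollary from that uniqueness, and while the well-definedness and surjectivity parts are fine, the injectivity step has a genuine gap.

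The gap is in the passage from ``$\phi$ is a group homomorphism $\Pi_d\to\mathbb{R}$'' to ``$\phi = c\cdot\textrm{Vol}_d$.'' The coordinate functional you build from a $\mathbb{Q}$-basis is a $\mathbb{Q}$-linear map, i.e.\ a bare group homomorphism. But the uniqueness of volume cannot possibly hold in this generality: once the corollary is true, $\Pi_d\cong\mathbb{R}$, and composing $\textrm{Vol}_d$ with any $\mathbb{Q}$-linear-but-not-$\mathbb{R}$-linear map $f:\mathbb{R}\to\mathbb{R}$ (a Hamel-basis construction) produces a translation-invariant valuation which is additive, vanishes on $\Pi_j$ for $j\neq d$, satisfies $\phi(nP)=n^d\phi(P)$ for integer $n$, and yet is not a scalar multiple of volume. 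So either you read ``degree~$d$'' as integer homogeneity, in which case the uniqueness statement you are invoking is false; or you read it as real homogeneity $\phi(\lambda P)=\lambda^d\phi(P)$ for all $\lambda\in\mathbb{R}_{\geq 0}$, in which case your $\mathbb{Q}$-linear coordinate functional has no reason to lie in the class the uniqueness theorem is about, and the step fails. In both readings the argument does not close.

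What is actually missing, and what McMullen supplies in Section~7 of \cite{main}, is the $\mathbb{R}$-vector space structure on $\Pi_d$ (part~(ii) of Theorem~\ref{thm:structure}, which this paper also does not prove) together with uniqueness among $\mathbb{R}$-\emph{linear} functionals. With those in hand your strategy works verbatim: given $x\neq0$, extend $\{x\}$ to an $\mathbb{R}$-basis, take the $\mathbb{R}$-linear coordinate functional $\phi$, conclude $\phi=c\cdot\textrm{Vol}_d$ with $c\neq 0$, and deduce $\textrm{Vol}_d(x)\neq 0$. But the jump from $\mathbb{Q}$-linear to $\mathbb{R}$-linear is precisely the nontrivial content being cited, so as written your proof assumes a form of the uniqueness statement that is either false or not applicable, and it should instead defer injectivity (together with the $\mathbb{R}$-structure it silently relies on) to the cited reference, as the paper does.
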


We can convince ourselves that $\Pi_d$ is not trivial. In fact, the class of each half open segment is in $\Pi_1$.  And hence $\llbracket [\textbf{0},\textbf{e}_1)\rrbracket\ast\cdots \ast\llbracket[\textbf{0},\textbf{e}_d)\rrbracket\in \Pi_d$. Such a class can be represented by the half open cube $\{\textbf{x}\in\mathbb{R}^d: 0\leq x_i<1\textrm{ for all }i\in[d]\}$ which has volume one, so it can be taken a the generator of $\Pi_d$.

\begin{example}\label{ex:vol}
Corollary \ref{cor:volume} implies that any two elements in $\Pi_d$ with the same volume are equivalent in $\Pi$. In Figure \ref{fig:vol} we illustrate one example.
\begin{figure}[h]
\centering
\includegraphics[scale=0.75]{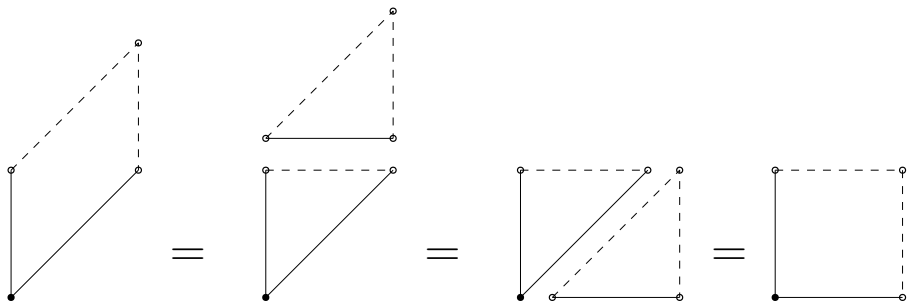}
\caption{An instace of two half open parallelograms with the same area.}\label{fig:vol}
\end{figure}
\end{example}

\subsection{Two applications}

\subsubsection{Mixed Volumes}

For any polytope $P\in {\mathcal{P}}_d$ we have that the volume is an homogeneous valuation of degree $d$, i.e., $\textrm{Vol}(tP)=t^d\textrm{Vol}(P)$. Here is a more general version.

\begin{theorem}\label{thm:mixvol}
For polytopes $P_1,\cdots, P_m\in {\mathcal{P}}_d$, we have
\[
\displaystyle \textrm{Vol}(\lambda_1P_1+\lambda_2P_2+\cdots+\lambda_mP_m) = \sum_{i_1,\cdots,i_d=1}^d V(P_{i_1},\ldots,P_{i_d}) \lambda_{i_1}\cdots\lambda_{i_d},
\]
where each symmetric coefficient $V(P_{i_1},\ldots,P_{i_d})$ depends only on the bodies
$P_{i_1},\ldots,P_{i_d}$.
\end{theorem}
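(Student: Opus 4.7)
The plan is to read off the volume of $\lambda_1 P_1 + \cdots + \lambda_m P_m$ from the degree-$d$ component of $\llbracket \lambda_1 P_1 + \cdots + \lambda_m P_m \rrbracket$ in $\Pi^d$. Since Minkowski sum corresponds to multiplication in the polytope algebra,
\[
\llbracket \lambda_1 P_1 + \cdots + \lambda_m P_m \rrbracket \;=\; \llbracket \lambda_1 P_1 \rrbracket \ast \cdots \ast \llbracket \lambda_m P_m \rrbracket.
\]
Setting $p_i := \log \llbracket P_i \rrbracket$, we know by Theorem \ref{thm:polynomiality} that each $p_i$ lies in $\Pi_1$ (the dilation map $D_r$ satisfies $D_r p_i = r p_i$). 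This is the first key point: because $p_i \in \Pi_1$, all powers $p_i^k$ live in $\Pi_k$, and the formula $\llbracket nP_i \rrbracket = \exp(n p_i)$ from Theorem \ref{thm:polynomiality} generalizes via $D_\lambda$ to every $\lambda \in \mathbb{Q}_{\geq 0}$, giving $\llbracket \lambda P_i \rrbracket = \exp(\lambda p_i)$.

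Using the identity $\exp(a)\exp(b) = \exp(a+b)$ valid in $\Pi^d$, we then obtain
\[
\llbracket \lambda_1 P_1 + \cdots + \lambda_m P_m \rrbracket \;=\; \exp(\lambda_1 p_1 + \cdots + \lambda_m p_m) \;=\; \sum_{k=0}^d \frac{1}{k!}\bigl(\lambda_1 p_1 + \cdots + \lambda_m p_m\bigr)^k.
\]
The term with $k=j$ lies in $\Pi_j$. Because volume is a translation-invariant valuation homogeneous of degree $d$, Corollary \ref{cor:homog} implies that the induced map $\mathrm{Vol}_d$ annihilates $\Pi_j$ for $j \neq d$, so only the top-degree summand contributes. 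Applying $\mathrm{Vol}_d$ and expanding the $d$-th power via the multinomial theorem (legitimate since $\ast$ is commutative) yields
\[
\mathrm{Vol}(\lambda_1 P_1 + \cdots + \lambda_m P_m) \;=\; \sum_{i_1,\ldots,i_d = 1}^{m} \lambda_{i_1}\cdots \lambda_{i_d} \cdot \frac{1}{d!}\,\mathrm{Vol}_d\bigl(p_{i_1}\ast \cdots \ast p_{i_d}\bigr).
\]
Define $V(P_{i_1},\ldots,P_{i_d}) := \tfrac{1}{d!}\mathrm{Vol}_d(p_{i_1}\ast \cdots \ast p_{i_d})$. Symmetry is immediate from commutativity of $\Pi^d$, and by construction this coefficient depends only on the bodies $P_{i_1},\ldots,P_{i_d}$ (via their logarithms), not on any $\lambda_j$.

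The main subtlety I expect is not the algebra but the scalar extension: the identity above has been derived for $\lambda_i \in \mathbb{Q}_{\geq 0}$, whereas the statement is usually asserted for arbitrary nonnegative reals. Both sides of the final formula are continuous functions of $(\lambda_1,\ldots,\lambda_m)$ on $\mathbb{R}_{\geq 0}^m$ (the left-hand side because Minkowski sum and volume depend continuously on the $\lambda_i$ in the Hausdorff metric, the right-hand side because it is a polynomial), so equality on the dense subset $\mathbb{Q}_{\geq 0}^m$ forces equality everywhere. A secondary but essential point to verify carefully is that $p_i = \log \llbracket P_i \rrbracket$ genuinely lies in $\Pi_1$ so that the grading argument applies; once this is granted, the rest of the argument is just transcription of the formal power series identity $\exp(\sum \lambda_i p_i)$ via $\mathrm{Vol}_d$.
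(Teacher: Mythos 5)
Your proof is correct and follows essentially the same route as the paper: represent the Minkowski sum as a product in $\Pi^d$, pass to logarithms so that $\llbracket \lambda_1P_1+\cdots+\lambda_mP_m\rrbracket = \exp(\lambda_1 p_1+\cdots+\lambda_m p_m)$, and apply $\mathrm{Vol}_d$ to kill everything outside the top degree. Two places where you are more careful than the paper's sketch, and rightly so: first, you record that the identity is initially only available for $\lambda_i\in\mathbb{Z}_{\geq 0}$ (or $\mathbb{Q}_{\geq 0}$ via $D_{1/m}$), and you close the gap to $\mathbb{R}_{\geq 0}$ by continuity of both sides; the paper's proof says ``where, for now, the $\lambda$'s are integers'' and never returns to the point. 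Second, your normalization $V(P_{i_1},\ldots,P_{i_d}) := \tfrac{1}{d!}\mathrm{Vol}_d(p_{i_1}\ast\cdots\ast p_{i_d})$ is the correct one: in the paper's Equation \eqref{eq:mixpoly} the summand should read $\tfrac{1}{j!}p_i^j\lambda_i^j$ (with $j$ starting at $0$), and after applying $\mathrm{Vol}_d$ and re-indexing via the multinomial theorem a factor $\tfrac{1}{d!}$ is absorbed, which the paper drops when it writes $V(P_{i_1},\ldots,P_{i_d})=\mathrm{Vol}_d(p_{i_1}\ast\cdots\ast p_{i_d})$. Your justification that $p_i\in\Pi_1$, via $D_r p_i = r p_i$, is exactly what is needed to make the grading argument work.
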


\begin{proof}

In the polytope algebra, consider the element $\llbracket P_1\rrbracket^{\lambda_1}\ast\llbracket P_2\rrbracket^{\lambda_2}\ast\cdots\ast\llbracket P_m\rrbracket^{\lambda_m}$, where, for now, the $\lambda$'s are integers. 
Using Equation \eqref{eq:polyid} we get
\begin{equation}\label{eq:mixpoly}
\displaystyle \llbracket P_1\rrbracket^{\lambda_1}\ast\llbracket P_2\rrbracket^{\lambda_2}\ast\cdots\ast\llbracket P_m\rrbracket^{\lambda_m} = \prod_{i=1}^m \sum_{j=1}^d \frac{1}{d!}p_i^j\lambda_i^j,
\end{equation}
where as usual $p_i=\log\llbracket P_i\rrbracket$. Taking $\textrm{Vol}_d$ at both sides we get precisely
\[
\displaystyle \textrm{Vol}(\lambda_1P_1+\lambda_2P_2+\cdots+\lambda_mP_m) = \sum_{i_1,\cdots,i_d=1}^d \textrm{Vol}_d(p_{i_1}\ast\cdots\ast p_{i_d}) \lambda_{i_1}\cdots\lambda_{i_d}.
\]
Notice that $\textrm{Vol}_d$ is homogeneous of degree $d$ so we only need to keep track of the degree $d$ part of the right hand side. Hence we can define $V(P_{i_1},\ldots,P_{i_d})=\textrm{Vol}_d(p_{i_1}\ast\cdots\ast p_{i_d})$ to finish the proof.
\end{proof}

\begin{definition}
The function $V(P_{i_1},\ldots,P_{i_d})$ is the \emph{mixed volume} of the tuple of polytopes $P_{i_1},\ldots,P_{i_d}\in{\mathcal{P}}_d$. 
\end{definition}

\subsubsection{Ehrhart Polynomial}
Let's focus briefly on \emph{lattice} polytopes and \emph{lattice} invariant valuations for some lattice $\Lambda\subset \mathbb{R}$. The main example of a valuation invariant under lattice translation (but not under all translations) is the counting valuation, $E(P):=|P\cap\Lambda|$. This case is substantially different and we cannot directly apply our results. However we can prove the following classic theorem. 
\begin{theorem}[Ehrhart]
The function $E_P(n):=E(nP)$ agrees with a polynomial in $n$ of degree $d$ whenever $n\in\mathbb{Z}_{\geq 0}$.
\end{theorem}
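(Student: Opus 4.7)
The plan is to adapt the polynomiality argument from Corollary \ref{cor:exp} to this lattice setting. The main obstacle is that $E$ is invariant only under lattice translations and therefore does not descend to $\Pi^d$; as a consequence we cannot use the clean nilpotency $(\llbracket P\rrbracket-1)^{d+1}=0$ in $\Pi^d$, and must instead work one level lower, in $\mathcal{SP}_d$, where only the unreduced factored identity of Proposition \ref{thm:importantid} is available.

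First I would check that $E$ extends by inclusion--exclusion to an additive functional on all of $\mathcal{SP}_d$ (since $|(A\cup B)\cap\Lambda|=|A\cap\Lambda|+|B\cap\Lambda|-|A\cap B\cap\Lambda|$). Any lattice polytope $P$ admits a triangulation into lattice simplices $T_i$, which by inclusion--exclusion gives $[P]=\sum_i\alpha_i[T_i]$ with $\alpha_i\in\{\pm1\}$, and the same identity survives dilation: $[nP]=\sum_i\alpha_i[nT_i]$. So by linearity of $E$ it suffices to prove polynomiality in $n$ of $E(nT)$ for a single lattice $d$-simplex $T$.

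Now fix such a simplex $T=\textrm{conv}(v_0,\ldots,v_d)$ with $v_i\in\Lambda$. Proposition \ref{thm:importantid} gives $\prod_{i=0}^d([T]-[v_i])=0$ in $\mathcal{SP}_d$. Multiplying by $[T]^{\ast(n-d-1)}$ for $n\geq d+1$ and expanding via $[A]\ast[B]=[A+B]$ yields
\[
\sum_{I\subseteq\{0,\ldots,d\}}(-1)^{|I|}\bigl[(n-|I|)T+v_I\bigr]=0,\qquad v_I:=\sum_{i\in I}v_i.
\]
Since each $v_I\in\Lambda$, applying the lattice--invariant functional $E$ collapses the translations and produces the scalar recurrence
\[
\sum_{k=0}^{d+1}(-1)^{k}\binom{d+1}{k}E\bigl((n-k)T\bigr)=0\qquad (n\geq d+1),
\]
which is precisely the vanishing of the $(d+1)$-th forward difference of the sequence $m\mapsto E(mT)$. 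Therefore $E(nT)$ coincides with the unique polynomial of degree $\leq d$ that interpolates its first $d+1$ values, and summing with signs over the triangulation shows that $E(nP)=\sum_i\alpha_i E(nT_i)$ is a polynomial in $n$ of degree at most $d$.

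To conclude that the degree is exactly $d$, one identifies the leading coefficient of $E_P$ with $\textrm{Vol}(P)$ using the elementary Riemann--sum estimate $n^{-d}|nP\cap\Lambda|\to\textrm{Vol}(P)$ as $n\to\infty$. The delicate point throughout is the one flagged at the start: because we are forced to stay in $\mathcal{SP}_d$ instead of passing to $\Pi^d$, we must carry the factored product through the computation, and it is only the observation that the translation vectors $v_I$ appearing after expansion are automatically lattice vectors that rescues the argument.
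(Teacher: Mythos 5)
Your proof is correct and follows essentially the same approach as the paper: triangulate $P$ into lattice simplices, invoke Proposition \ref{thm:importantid} for each simplex, and observe that every translation appearing in the resulting identity in $\mathcal{SP}_d$ is by a lattice vector, so the counting valuation $E$ can be applied term by term. Your formulation via the vanishing of the $(d+1)$-st forward difference is equivalent to the paper's binomial expansion $E(nP)=\sum_{i=0}^d\binom{n}{i}\tilde{f^*_i}(P)$; the added value in your write-up is that you spell out the ``lattice translations only'' subtlety (which the paper dispatches in a single sentence by appealing to Equation \eqref{eq:weakexp} and Corollary \ref{cor:exp}, even though the latter is stated in $\Pi^d$, the quotient by \emph{all} translations), and you supply the elementary volume estimate showing the degree is exactly $d$.
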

\begin{proof}
Notice that at least Equation \eqref{eq:weakexp} doesn't involve any scaling (since the binomial coefficients are integers), it is invariant under lattice translation, and all polytopes appearing are lattice polytopes as long as $P$ is. Then one can apply the counting valuation $E(P):=|P\cap\Lambda|$ on both sides to obtain the following (we only sum up to $d$ by Corollary \ref{cor:exp}):
\begin{equation}\label{eq:ehrhart}
E_P(n):=E(nP)=\sum_{i=0}^d\binom{n}{i}\tilde{f^*_i}(P),
\end{equation}
where $\tilde{f^*_i}(P)$ are some integers depending on $P$. Notice that for $n=0$ we indeed get $E_P(0)=1$.
\end{proof}

\subsection{Minkowski Weights}

Corollary \ref{cor:volume} does not say that $\textrm{Vol}_d(P)=\textrm{Vol}_d(Q)$ for polytopes $P$ and $Q$ implies $\llbracket P\rrbracket=\llbracket Q\rrbracket$, since such elements do not belong to $\Pi_d$ (see Figure \ref{fig:vol} for an example of what it does say).
Nevertheless, we have the following theorem. For an modern elementary proof see \cite{klain}.

\begin{theorem}[Minkowski]\label{thm:mink}
Let $P,Q\in {\mathcal{P}}_d$ be two \emph{full} dimensional polytopes. Then $
\textrm{Vol}_{d-1}(P^\textbf{c}) = \textrm{Vol}_{d-1}(Q^\textbf{c})$ for all $c$ implies that $P$ and $Q$ are equal up to translation.
\end{theorem}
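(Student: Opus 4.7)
The plan is to derive Minkowski's uniqueness theorem as a corollary of Minkowski's first inequality, once a classical surface-area identity is in place. Let $\textbf{c}_1, \ldots, \textbf{c}_n$ denote the facet outer unit normals of $P$, with facets $F_i = P^{\textbf{c}_i}$, and write $h_Q(\textbf{c}) := \max_{x \in Q} \textbf{c}^t x$ for the support function of $Q$. The first ingredient I would establish is the identity
\[
d \cdot V(\underbrace{P, \ldots, P}_{d-1}, Q) \;=\; \sum_{i=1}^n h_Q(\textbf{c}_i) \, \textrm{Vol}_{d-1}(F_i).
\]
This falls out of Theorem \ref{thm:mixvol} applied to $P + tQ$: the coefficient of $t$ in $\textrm{Vol}(P + tQ)$ equals $d \cdot V(P[d-1], Q)$, and the same coefficient can be computed geometrically by decomposing $(P + tQ) \setminus P$ into prismatic slabs of height $t \cdot h_Q(\textbf{c}_i)$ erected over each facet $F_i$, with $O(t^2)$ error concentrated on lower-dimensional faces.

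The second step uses the hypothesis to match the facet data of $P$ and $Q$. If $\textbf{c}$ is not a facet normal of $P$, then $\dim P^{\textbf{c}} < d-1$ and hence $\textrm{Vol}_{d-1}(P^{\textbf{c}}) = 0$; the assumption then forces $\textrm{Vol}_{d-1}(Q^{\textbf{c}}) = 0$, so $\textbf{c}$ is not a facet normal of $Q$ either. Thus $P$ and $Q$ share the same facet normals $\textbf{c}_1, \ldots, \textbf{c}_n$ and common facet volumes $v_i$. Substituting into the surface-area formula yields
\[
d \cdot V(P[d-1], Q) \;=\; \sum_{i} h_Q(\textbf{c}_i) \, v_i \;=\; d \cdot \textrm{Vol}(Q),
\]
where the last equality is the same formula applied with $Q$ in every slot. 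By symmetry $V(P, Q[d-1]) = \textrm{Vol}(P)$.

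The endgame invokes Minkowski's first inequality $V(P[d-1], Q)^d \geq \textrm{Vol}(P)^{d-1} \textrm{Vol}(Q)$, with equality iff $P$ and $Q$ are homothetic. The substitution above gives $\textrm{Vol}(Q)^d \geq \textrm{Vol}(P)^{d-1} \textrm{Vol}(Q)$, i.e., $\textrm{Vol}(Q)^{d-1} \geq \textrm{Vol}(P)^{d-1}$, and the symmetric substitution yields the reverse, so equality holds. Thus $Q = \lambda P + \textbf{t}$ for some $\lambda > 0$ and $\textbf{t} \in \mathbb{R}^d$; matching volumes forces $\lambda = 1$, proving the theorem.

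The principal obstacle is the equality case of Minkowski's first inequality, which sits outside the polytope algebra framework and is typically derived from Brunn-Minkowski; Klain's alternative bypasses this via an inductive argument on dimension that exploits translation invariance of the surface-area measure. The surface-area formula itself is routine once the mixed-volume expansion of Theorem \ref{thm:mixvol} is on the table.
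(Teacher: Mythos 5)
The paper does not actually prove Theorem~\ref{thm:mink}; it states it and refers the reader to Klain~\cite{klain} for ``a modern elementary proof,'' so there is no in-paper argument to compare against step by step. Your proof is the classical one, and it is correct: the surface-area identity $d\,V(P[d-1],Q)=\sum_i h_Q(\mathbf{c}_i)\,\textrm{Vol}_{d-1}(F_i)$ (readable off the $t$-linear term of $\textrm{Vol}(P+tQ)$, as you describe), the observation that the hypothesis forces $P$ and $Q$ to share facet normals and facet volumes, the resulting identities $V(P[d-1],Q)=\textrm{Vol}(Q)$ and $V(Q[d-1],P)=\textrm{Vol}(P)$, and the squeeze via Minkowski's first inequality forcing equality, hence homothety, with volume-matching pinning $\lambda=1$ --- all of this is sound.

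The point worth stressing, which you already flag yourself, is that this route rests on the \emph{equality case} of Minkowski's first inequality, which is essentially of the same depth as the Brunn--Minkowski inequality and lives entirely outside the polytope-algebra toolkit the paper is building. That is precisely the machinery the paper's cited reference exists to sidestep: Klain's proof is ``elementary'' in the specific sense that it proceeds by induction on dimension, comparing surface-area data of corresponding lower-dimensional faces, and never invokes Brunn--Minkowski. So your proposal is correct but trades the paper's intended lightweight reference for a heavier classical import; if the aim were to keep the argument self-contained in the combinatorial spirit of the paper, a Klain-style induction is what you would want to develop rather than the Brunn--Minkowski route.
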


A priori we need to check infinitely many directions, but it could be finite if we know where to look (there are finite $\textbf{c}$ that give facets, so most of the time both quantities are zero). We need to generalize a bit previous theorem so that we have a criterion for lower dimensional polytopes.

\begin{definition}
A $(d-k)$ \emph{frame} is a $(d-k)$ tuple of vectors $U=(\mathbf{u}_1,\mathbf{u}_2,\cdots,\mathbf{u}_{d-k})$ in $\mathbb{R}^d$ such that $\mathbf{u}_i^t\mathbf{u}_j=\delta_{ij}$. Given a $(d-k)$ frame $u$ we define the face $P^U~:=\left(\cdots\left(\left(P^{\mathbf{u}_1}\right)^{\mathbf{u}_2}\right)\cdots\right)^{\mathbf{u}_{d-k}}$. Because of orthogonality, the dimension is reduced by at least 1 on each step, so $\dim(P^u)\leq k$

\end{definition}

We also define the \emph{frame functionals} to be $V_U(P):=\textrm{Vol}_k(P^U)$. These are homogeneous valuations of degree $k$.

\begin{theorem}[Generalized Minkowski]\label{thm:genmink}
Let $P,Q\in {\mathcal{P}}_d$ be two $k$-polytopes. Then $
V_U(P) = V_U(Q)$ for all $(d-k)$-frame functionals $U$ implies that $P$ and $Q$ are equal up to translation.
\end{theorem}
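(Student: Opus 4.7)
My strategy is to reduce the statement to the classical Minkowski theorem (Theorem \ref{thm:mink}) applied inside the common $k$-dimensional affine span of $P$ and $Q$. The argument splits into two stages.

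The first stage shows that $L := \textrm{lin}(P) = \textrm{lin}(Q)$. For a $k$-polytope $R \subset \mathbb{R}^d$ and any unit vector $\mathbf{u}$, the linear functional $\mathbf{u}^t\mathbf{x}$ is constant on $R$ exactly when $\mathbf{u} \in \textrm{lin}(R)^\perp$; in that case $R^{\mathbf{u}} = R$, and otherwise $R^{\mathbf{u}}$ is a proper face of strictly smaller dimension. Iterating along a frame $U$, the iterated face $R^U$ retains the original dimension $k$ only when every $\mathbf{u}_i$ lies in $\textrm{lin}(R)^\perp$: once some $\mathbf{u}_i$ has a nonzero component along $\textrm{lin}(R)$, the dimension drops and subsequent face operations cannot recover it. For a $(d-k)$-frame this forces $\textrm{span}(U)=\textrm{lin}(R)^\perp$, in which case $V_U(R)=\textrm{Vol}_k(R)$; in every other situation $V_U(R)=0$. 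If $\textrm{lin}(P)\neq\textrm{lin}(Q)$, taking any orthonormal basis of $\textrm{lin}(P)^\perp$ as a $(d-k)$-frame $U$ gives $V_U(P)=\textrm{Vol}_k(P)>0$ but $V_U(Q)=0$, contradicting the hypothesis.

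The second stage applies Theorem \ref{thm:mink} inside $L$. After translation we may assume $P,Q\subset L$. For any unit $\mathbf{c}\in L$, extend $\mathbf{c}$ by an orthonormal basis $(\mathbf{w}_1,\ldots,\mathbf{w}_{d-k})$ of $L^\perp$ to form a frame; the $\mathbf{w}_i$ leave both $P$ and $Q$ unchanged under the face operation, and then $\mathbf{c}$ carves out the face in direction $\mathbf{c}$ inside $L$. The frame-functional hypothesis applied to such frames yields $\textrm{Vol}_{k-1}(P^{\mathbf{c}})=\textrm{Vol}_{k-1}(Q^{\mathbf{c}})$ for every unit $\mathbf{c}\in L$. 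Since $P$ and $Q$ are now full-dimensional inside $L\cong\mathbb{R}^k$, Theorem \ref{thm:mink} concludes that they agree up to translation.

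The main obstacle is the first stage: recognising that $V_U(R)$ collapses to a binary invariant on $k$-polytopes (either $\textrm{Vol}_k(R)$ or zero, according to how $\textrm{span}(U)$ sits relative to $\textrm{lin}(R)^\perp$) and exploiting this collapse to pin down a common linear span. Once $L$ has been identified, translating into it and completing each direction $\mathbf{c}\in L$ to a frame via an orthonormal basis of $L^\perp$ reduces the frame functionals to honest facet volumes, after which the classical Minkowski theorem delivers the conclusion.
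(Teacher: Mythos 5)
The paper does not actually give a proof of this theorem (it is stated after the Minkowski theorem with a reference to \cite{klain} for the latter), so there is no paper proof to compare against. Assessing your argument on its own merits, the overall reduction strategy — pin down the common linear span, translate into it, and invoke the classical Minkowski theorem there — is the natural and correct one. However, there is a frame-size mismatch between the hypothesis and your Stage~2 that you should have flagged.

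In Stage~2 you build the frame $(\mathbf{w}_1,\ldots,\mathbf{w}_{d-k},\mathbf{c})$, which has $d-k+1$ entries; the corresponding frame functional is $\textrm{Vol}_{k-1}(P^{\mathbf{c}})$. The theorem, as written, only hypothesizes $V_U(P)=V_U(Q)$ for $(d-k)$-frames, so it says nothing about these $(d-k+1)$-frames, and Stage~2 is unsupported. In fact your own Stage~1 analysis shows why the stated hypothesis cannot suffice: for a $k$-polytope $R$ and a $(d-k)$-frame $U$, the value $V_U(R)=\textrm{Vol}_k(R^U)$ is $\textrm{Vol}_k(R)$ when $\textrm{span}(U)=\textrm{lin}(R)^\perp$ and $0$ otherwise, so the entire family of $(d-k)$-frame functionals encodes nothing beyond $\textrm{lin}(R)$ and $\textrm{Vol}_k(R)$ — which certainly does not determine $R$ up to translation (taking $k=d$, the $0$-frame gives only $\textrm{Vol}_d$, and ``equal volume implies translates'' is plainly false). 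The statement must be read with $(d-k+1)$-frame functionals (matching Theorem~\ref{thm:mink}, the case $k=d$), or more robustly with frame functionals of all lengths. Under that reading Stage~2 becomes legitimate, but then your Stage~1 argument — which relies specifically on $(d-k)$-frames — would need to be re-derived from $(d-k+1)$-frames (e.g.\ by noting that $V_U(P)$ for frames $U$ of the form $(\mathbf{w}_1,\ldots,\mathbf{w}_{d-k},\mathbf{c})$ with $\mathbf{w}_i$ an orthonormal basis of $\textrm{lin}(P)^\perp$ is positive for generic $\mathbf{c}\in\textrm{lin}(P)$, whereas if $\textrm{lin}(Q)\neq\textrm{lin}(P)$ the intermediate face $Q^{\mathbf{w}_1,\ldots,\mathbf{w}_{d-k}}$ already drops below dimension $k$ and one must rule out that it has dimension exactly $k-1$ with the right orientation). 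A proof that silently switches frame lengths mid-argument, without noting the tension with the stated hypothesis, leaves a genuine gap.
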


\section{A finitely generated subalgebra.}

So far $\Pi^d$ is not finitely generated as an algebra. We will restrict to a subalgebra.

\begin{definition}
For $P,Q\in{\mathcal{P}}_d$ we say $Q$ is a \emph{Minkowski summand} of $P$, and we write $P\leq Q$ if there exists $R$ such that $P=Q+R$. We say $Q$ is a \emph{weak Minkowski summand} of $P$, and we write $P\preceq Q$ if there exists $\lambda\in\mathbb{R}_{>0}$ such that $Q\leq \lambda P$.
\end{definition}

\begin{definition}
Fixing $P\in{\mathcal{P}}_d$, we define $\Pi(P)=\mathbb{Z}\{\llbracket Q\rrbracket: Q\preceq P\}$.
\end{definition}

\begin{proposition} The following statements hold.
\begin{itemize}
\item[(i)] $\Pi(P)$ is a \emph{finitely generated} graded subalgebra of $\Pi$.
\item[(ii)] $Q\preceq P\Longrightarrow \Pi(Q)\subset \Pi(P)$.
\item[(iii)] $\Pi(P)$ is generated by Minkowski summands of $P$.
\item[(iv)] $\Pi(P)+\Pi(Q)\subset \Pi(P+Q)$.
\end{itemize}
\end{proposition}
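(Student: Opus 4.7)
Parts (iv) and (ii) follow by chaining the defining relations of $\preceq$. For (iv), if $R \preceq P$ is witnessed by $P = \lambda R + S$ with $\lambda > 0$, then $P + Q = \lambda R + (S + Q)$ shows $R \preceq P + Q$; symmetrically $\Pi(Q) \subset \Pi(P+Q)$, yielding $\Pi(P) + \Pi(Q) \subset \Pi(P+Q)$. For (ii), if $R \preceq Q$ via $Q = \mu R + R'$ and $Q \preceq P$ via $P = \nu Q + Q'$, then substitution gives $P = \nu \mu R + (\nu R' + Q')$, so $R \preceq P$ and hence $\Pi(Q) \subset \Pi(P)$.

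For closure under $\ast$ in (i), I would start from $Q_1, Q_2 \preceq P$ with $P = \lambda_i Q_i + S_i$. Scaling the $i$-th relation by $1/\lambda_i$ and adding yields
$$(\lambda_1^{-1} + \lambda_2^{-1})\,P \;=\; (Q_1 + Q_2) + (\lambda_1^{-1} S_1 + \lambda_2^{-1} S_2),$$
exhibiting $Q_1 + Q_2 \preceq P$ with witness $\Lambda = (\lambda_1^{-1} + \lambda_2^{-1})^{-1}$; hence $\llbracket Q_1 \rrbracket \ast \llbracket Q_2 \rrbracket = \llbracket Q_1 + Q_2 \rrbracket \in \Pi(P)$. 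For the grading I would observe that $Q \preceq P$ implies $n Q \preceq P$ for every $n \in \mathbb{Z}_{>0}$ (divide the witnessing $\lambda$ by $n$). Theorem \ref{thm:polynomiality} then writes $\llbracket n Q \rrbracket = \sum_{k=0}^d \frac{n^k}{k!} q^k$ with $q = \log \llbracket Q \rrbracket$, and a Vandermonde inversion over $n = 1, \ldots, d+1$ expresses each graded piece $\frac{1}{k!} q^k$ as a $\mathbb{Q}$-linear combination of the classes $\llbracket n Q \rrbracket \in \Pi(P)$. A check that the divisibility construction of Lemma \ref{lem:divisible} only employs classes of further dilations of the $nQ$ (which remain $\preceq P$) shows that $\Pi(P) \cap \Pi_+$ is itself divisible, so the rational combinations remain in $\Pi(P)$ and each graded piece of $\llbracket Q \rrbracket$ does too.

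For (iii), given $Q \preceq P$ with $P = \lambda Q + S$, the polytope $\lambda Q$ is an honest Minkowski summand of $P$. Applying Theorem \ref{thm:polynomiality} to $Q = (1/\lambda)(\lambda Q)$ writes
$$\llbracket Q\rrbracket \;=\; \sum_{k=0}^d \frac{\lambda^{-k}}{k!}\bigl(\log \llbracket \lambda Q\rrbracket\bigr)^k,$$
and since $\log \llbracket \lambda Q \rrbracket$ is a rational polynomial in $\llbracket \lambda Q \rrbracket - 1$, this places $\llbracket Q \rrbracket$ inside the subalgebra generated by the class of $\lambda Q$. So classes of genuine Minkowski summands of $P$ are enough to generate $\Pi(P)$ as an algebra.

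The main obstacle is the finite generation in (i). The standard external input is that the set of Minkowski summands of $P$, taken modulo translation, forms a finite-dimensional polyhedral cone — the type cone of $P$. Concretely, any summand $R \leq P$ is determined by its support function, which must be piecewise linear on the normal fan of $P$; such functions live in a finite-dimensional real vector space indexed by the vertices of $P$, and inside this vector space the cone of summands is cut out by finitely many linear inequalities enforcing convexity across each edge. Hence this cone has finitely many extreme rays; choosing polytopal representatives $R_1, \ldots, R_s$ and invoking (iii) exhibits $\llbracket R_1 \rrbracket, \ldots, \llbracket R_s \rrbracket$ as a finite algebra-generating set for $\Pi(P)$ over $\mathbb{Z}$.
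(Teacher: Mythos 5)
Your proof is correct and follows the same route as the paper, which disposes of all four items in a few lines: the subalgebra property because the Minkowski sum of two weak summands is again a weak summand, the grading because $\llbracket Q\rrbracket\in\Pi(P)$ implies $\log\llbracket Q\rrbracket\in\Pi(P)$, and finite generation via the pointed cone of weak summands described in Remark~\ref{rem:fin}. You fill in the details the paper leaves implicit --- the chaining for (ii) and (iv), the Vandermonde inversion and the check that the divisibility construction of Lemma~\ref{lem:divisible} stays inside $\Pi(P)$, and the type cone (parametrized by support function values at vertices rather than by the edge $1$-weights used in Remark~\ref{rem:fin}, an equivalent coordinatization) --- but the underlying argument is the paper's.
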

\begin{proof}
It is a subalgebra because the Minkowski sum of two weak summands is still a weak summand. For finite generation see Remark $\ref{rem:fin}$ below. It is graded since $Q\in\Pi(P)$ implies $\log(Q)\in \Pi(P)$. The other conditions are not hard to check.
\end{proof}
There is a criterion for determining when a polytope is a summand of another (See \cite[Chapter 15]{grunbi}).
\begin{theorem}[Shephard]\label{thm:shepard}
The polytope $Q$ is a Minkowski summand of $P$ if and only if the following conditions are satisfied:
\begin{enumerate}
\item[(i)] $\dim P^c \geq \dim Q^c$ for any $c\in\mathbb{R}^d$.
\item[(ii)] If for some $c\in\mathbb{R}^d$ we have $\dim P^c = 1$, then $vol_1(P^c)\geq \textrm{vol}_1(Q^c)$.
\end{enumerate}
\end{theorem}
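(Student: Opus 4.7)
The plan is to pass to support functions $h_P(c) := \max_{x \in P} c^t x$, which satisfy $h_{P+Q} = h_P + h_Q$, and use the classical fact that a positively homogeneous piecewise linear function on $\mathbb{R}^d$ is the support function of some polytope if and only if it is convex. Thus producing $R$ with $P = Q + R$ reduces to showing that $h_P - h_Q$ is convex, since positive homogeneity and piecewise linearity are automatic.

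The forward direction $(\Rightarrow)$ is immediate from the identity $(Q+R)^c = Q^c + R^c$: this gives $\dim P^c = \dim Q^c + \dim R^c$, hence (i), and when $\dim P^c = 1$ the segment $P^c$ splits as a Minkowski sum of two parallel (possibly degenerate) segments of total length $\text{vol}_1(P^c)$, hence (ii).

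For the backward direction $(\Leftarrow)$, I would first observe that condition (i) is equivalent to the normal fan $\Sigma_P$ refining the normal fan $\Sigma_Q$, since $\dim P^c$ equals $d$ minus the dimension of the cone of $\Sigma_P$ whose relative interior contains $c$. Consequently $h_P$ and $h_Q$ are both linear on each maximal cone of $\Sigma_P$, and convexity of $h_P - h_Q$ reduces to a local check across each wall $\tau$ of $\Sigma_P$. At such a wall, let $\sigma^{\pm}$ be the two adjoining maximal cones, $v^{\pm}$ the corresponding vertices of $P$, $w^{\pm}$ the corresponding vertices of $Q$ (possibly equal when $\tau$ lies inside a maximal cone of $\Sigma_Q$), and $u$ the unit normal to $\tau$ pointing into $\sigma^+$. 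Since $v^+$ and $v^-$ both maximize $c^t x$ on $P$ for any $c_0 \in \text{relint}(\tau)$, the difference $v^+ - v^-$ is parallel to $u$ with $u^t(v^+ - v^-) = \text{vol}_1(P^{c_0})$, and analogously $u^t(w^+ - w^-) = \text{vol}_1(Q^{c_0})$. A direct calculation identifies the convexity inequality for $h_P - h_Q$ across $\tau$ with $\text{vol}_1(P^{c_0}) \geq \text{vol}_1(Q^{c_0})$, which is exactly condition (ii). The main obstacle is this local identification of the "gradient jump" of $h_P - h_Q$ with the edge-length difference; once coordinates on the normal fan are set up carefully, (i) and (ii) mesh so neatly that they imply convexity, and the desired $R$ can be recovered as the polytope whose support function is $h_P - h_Q$.
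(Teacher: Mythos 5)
The paper does not prove Shephard's theorem; it only cites Gr\"unbaum's book, so there is no internal proof to compare against. Your support-function argument is the standard approach and is essentially correct, but one intermediate claim in the forward direction is wrong: the identity $\dim P^c = \dim Q^c + \dim R^c$ fails in general, since Minkowski addition of two parallel segments does not add dimensions (dimensions add only when $\mathrm{lin}(Q^c)$ and $\mathrm{lin}(R^c)$ meet trivially). What is true and what you actually need is the monotonicity $\dim(Q^c+R^c)\geq \max(\dim Q^c,\dim R^c)$, which gives (i), and the observation that when $\dim P^c = 1$ both $Q^c$ and $R^c$ are (possibly degenerate) parallel subsegments whose lengths add, which gives (ii). The backward direction is sound as sketched: condition (i) is equivalent to $\Sigma_P$ refining $\Sigma_Q$, so both $h_P$ and $h_Q$ are linear on each maximal cone of $\Sigma_P$; since $\mathrm{lin}(P^{c_0}) = \mathrm{lin}(\tau)^{\perp}=\mathbb{R}u$ the jump of the gradient of $h_P-h_Q$ across a wall $\tau$ is $u^t((v^+-v^-)-(w^+-w^-)) = \mathrm{vol}_1(P^{c_0})-\mathrm{vol}_1(Q^{c_0})$, with the sign fixed by $u$ pointing into $\sigma^+$; and wall-by-wall convexity of a continuous positively homogeneous piecewise linear function on a complete fan does imply global convexity, so $h_P-h_Q$ is the support function of the required summand $R$.
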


\begin{corollary}\label{cor:wms}
The polytope $Q$ is a weak Minkowski summand of $P$ if and only if $\dim P^c \geq \dim Q^c$ for any $c\in\mathbb{R}^d$.
\end{corollary}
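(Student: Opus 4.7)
The forward direction is immediate from Shephard's theorem. If $Q\preceq P$, choose $\lambda>0$ with $Q\leq \lambda P$; then by condition (i) of Theorem \ref{thm:shepard} applied to the pair $(\lambda P, Q)$ we have $\dim(\lambda P)^c\geq \dim Q^c$ for every $c$. Since $(\lambda P)^c=\lambda\cdot P^c$ has the same dimension as $P^c$, this gives the desired inequality.

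For the converse, assume $\dim P^c\geq \dim Q^c$ for all $c$ and seek a single $\lambda>0$ such that $Q\leq \lambda P$. My plan is to verify both hypotheses of Theorem \ref{thm:shepard} for the pair $(\lambda P, Q)$, once $\lambda$ is chosen sufficiently large. Condition (i) is free: $\dim(\lambda P)^c=\dim P^c\geq \dim Q^c$. The content lies in condition (ii): whenever $\dim(\lambda P)^c=1$, we need $\lambda\cdot \textrm{vol}_1(P^c)\geq \textrm{vol}_1(Q^c)$.

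The key observation is that this is really a finite family of numerical inequalities. The set of $c\in\mathbb{R}^d$ with $\dim P^c=1$ is the union of the relative interiors of the $1$-dimensional cones of the normal fan of $P$, one such cone for each edge of $P$. As $c$ varies inside the cone associated to an edge $e$ of $P$, the left-hand side $\lambda\cdot \textrm{vol}_1(P^c)=\lambda\cdot \textrm{vol}_1(e)$ is constant, while $Q^c$ ranges over the finitely many faces of $Q$ whose outer normal cones meet this region. By the standing hypothesis $\dim Q^c\leq \dim P^c=1$, so each such $\textrm{vol}_1(Q^c)$ is either zero or the length of an edge of $Q$. Let $M$ be the maximum of $\textrm{vol}_1(Q^c)$ over all $c$ arising in this way (a maximum of a finite set, hence finite) and let $m>0$ be the minimum of $\textrm{vol}_1(e)$ over edges $e$ of $P$. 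Then any $\lambda\geq M/m$ forces condition (ii).

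The step I expect to be the main obstacle is not a computation but a bookkeeping subtlety: as $c$ traverses a single cone of the normal fan of $P$ on which $P^c$ is a fixed edge, $c$ may cross walls of the (different) normal fan of $Q$, so $Q^c$ is \emph{not} constant. One must see that condition (ii) of Shephard is not an inequality between pre-matched edges of $P$ and $Q$, but rather a supremum taken over the common refinement of the two normal fans. Once one recognizes that this supremum is a maximum over a finite set of edge lengths of $Q$, the required $\lambda$ is immediate.
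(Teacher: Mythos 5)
The paper states Corollary \ref{cor:wms} without proof, treating it as an immediate consequence of Theorem \ref{thm:shepard}, and your argument is exactly the intended filling-in: condition (i) of Shephard is scale-invariant and is the stated hypothesis, while condition (ii) concerns only finitely many edge lengths of $P$ and $Q$ and so can always be forced by a single sufficiently large $\lambda$; this is correct. One small slip in the exposition: the normal cones of the \emph{edges} of a full-dimensional $P$ are $(d-1)$-dimensional (codimension one), not $1$-dimensional --- the $1$-dimensional cones of the normal fan index the \emph{facets} of $P$ --- but this misattribution of dimension plays no role, since all you use is that there are finitely many edges of $P$ and finitely many edges of $Q$, giving a positive minimum $m$ and finite maximum $M$. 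You might also note in passing the degenerate case where $P$ is a point: then $m$ is a minimum over the empty set, but the hypothesis forces $Q$ to be a point as well and the conclusion is trivial.
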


\begin{remark}
The condition in Corollary \ref{cor:wms} is equivalent to saying that $Q\preceq P$ if and only if the normal fan of $P$ refines the normal fan of $Q$. From that, it is not hard to show that for any polytope $P$ one can find a \emph{simple} polytope $P'$ with $P\preceq P'$, and since $\Pi(P)\subset \Pi(P')$ we can always assume that $P$ is a simple polytope.
\end{remark}
Corollary \ref{cor:wms} is crucial in turning the infinite conditions of Theorem \ref{thm:genmink} into a finite set of conditions. 

\begin{definition}
Fix a simple polytope $P$ together with frame functionals $U(F)$ for $F\in\mathcal{F}(P)$ such that $P^U=F$.
Define the \emph{Minkowski} map $\phi$ as:
\begin{align}
\phi: \Pi(P) &\longrightarrow \bigoplus_{i=0}^d \mathbb{R}^{f_i(P)},\\
\llbracket Q \rrbracket&\longrightarrow \left(V_{U(F)}(Q)\right)_{F\in\mathcal{F}(P)},
\end{align}
\end{definition}

Theorem \ref{thm:genmink} guarantees that this map is an injection. However it is not surjective. The frame functionals satisfy linear relations.

\begin{theorem}[Minkowski Relations]\label{thm:minkrel}
Given a polytope $P$ with unit facet normals $\mathbf{u}_1,\cdots,\mathbf{u}_n$, the following linear equation holds:
\begin{equation}\label{eq:minkrel}
\displaystyle \sum_{i=1}^n \mathbf{u}_i\textrm{Vol}_{d-1}(P^{\mathbf{u}_i}) = 0.
\end{equation}
\end{theorem}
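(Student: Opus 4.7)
The plan is to reduce the vector identity to a scalar one and interpret each summand geometrically as a signed projected volume. Since two vectors in $\mathbb{R}^d$ are equal if and only if they have the same inner product with every $\mathbf{v} \in \mathbb{R}^d$, it suffices to fix an arbitrary $\mathbf{v}$ and show
\[
\sum_{i=1}^n (\mathbf{v}\cdot \mathbf{u}_i)\,\textrm{Vol}_{d-1}(P^{\mathbf{u}_i}) \;=\; 0.
\]
I would then let $\pi:\mathbb{R}^d\to \mathbf{v}^\perp$ be the orthogonal projection and recall the standard fact that for a facet $F_i = P^{\mathbf{u}_i}$, the $(d-1)$-volume of the projection $\pi(F_i)$ onto the hyperplane $\mathbf{v}^\perp$ equals $|\mathbf{v}\cdot \mathbf{u}_i|\,\textrm{Vol}_{d-1}(F_i)$ (the cosine-of-angle factor between the normals $\mathbf{u}_i$ and $\mathbf{v}/|\mathbf{v}|$, after rescaling by $|\mathbf{v}|$). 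Thus each term in the sum is a signed projected $(d-1)$-volume, and the sign matches the sign of $\mathbf{v}\cdot \mathbf{u}_i$.

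Next I would partition the facets into the upper, lower and vertical families
\[
I_+ = \{i: \mathbf{v}\cdot \mathbf{u}_i > 0\},\quad I_- = \{i: \mathbf{v}\cdot \mathbf{u}_i < 0\},\quad I_0 = \{i: \mathbf{v}\cdot \mathbf{u}_i = 0\}.
\]
The vertical facets contribute zero. For the others I would use the geometric observation that for almost every point $\mathbf{y} \in \textrm{relint}(\pi(P))$, the preimage $\pi^{-1}(\mathbf{y})\cap P$ is a line segment in direction $\mathbf{v}$ whose two endpoints lie on an upper facet and a lower facet respectively. Consequently the upper facets project (measurably bijectively) onto $\pi(P)$, and so do the lower facets. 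This gives
\[
\sum_{i\in I_+}(\mathbf{v}\cdot \mathbf{u}_i)\,\textrm{Vol}_{d-1}(F_i) \;=\; |\mathbf{v}|\,\textrm{Vol}_{d-1}(\pi(P)) \;=\; -\sum_{i\in I_-}(\mathbf{v}\cdot \mathbf{u}_i)\,\textrm{Vol}_{d-1}(F_i),
\]
and adding these yields the desired identity.

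The main obstacle is justifying the ``almost everywhere bijection'' between the upper (respectively lower) facets and $\pi(P)$. Honest care is needed at boundary points of $\pi(P)$ and at points projecting onto the image of a vertical facet, but since these bad sets are finite unions of polytopes of dimension at most $d-2$ and hence have $(d-1)$-measure zero, they can be discarded. An alternative way to package the argument, which avoids this bookkeeping entirely, is to invoke the divergence theorem for the constant vector field $\mathbf{v}$ on $P$: the integral of $\textrm{div}(\mathbf{v})=0$ over $P$ equals the boundary flux $\sum_i (\mathbf{v}\cdot\mathbf{u}_i)\textrm{Vol}_{d-1}(F_i)$, which must therefore vanish. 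Either route then completes the proof, since $\mathbf{v}$ was arbitrary.
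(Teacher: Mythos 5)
Your proof is correct and follows essentially the same route as the paper: pair against a test vector, project onto its orthogonal hyperplane, observe that upper and lower facets each cover the projection with the $|\mathbf{v}\cdot\mathbf{u}_i|$ weighting, and equate the two volume computations. The only cosmetic differences are that the paper picks a generic direction to avoid vertical facets while you handle them explicitly, and that you mention the divergence-theorem packaging as an alternative, but the core argument is identical.
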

\begin{proof} We choose a generic direction $\mathbf{u}$ to project the polytope into $\mathbf{u}^\perp$. The volume of the image of each facet $P^{\mathbf{u}_i}$ is proportional to $|\langle \mathbf{u},\mathbf{u}_i \rangle|\textrm{Vol}_{d-1}(P^{\mathbf{u}_i})$. The projection of the lower facets of $P$ with respect to $\mathbf{u}$, by which we mean the facets whose normals have negative inner product with $\mathbf{u}$, cover the projection $\pi(P)$. And the same is true for the upper facets. We can compute the volume of $\pi(P)$ using upper or lower facets, which yields
\[
\displaystyle \sum_{i: \langle \mathbf{u},-\mathbf{u}_i\rangle<0} \langle \mathbf{u},\mathbf{u}_i \rangle \textrm{Vol}_{d-1}(P^{\mathbf{u}_i}) =\sum_{i: \langle \mathbf{u},\mathbf{u}_i\rangle>0} \langle \mathbf{u},\mathbf{u}_i \rangle \textrm{Vol}_{d-1}(P^{\mathbf{u}_i}),
\]
which means that $\langle \mathbf{u},  \sum_{i=1}^n \mathbf{u}_i\textrm{Vol}_{d-1}(P^{\mathbf{u}_i})\rangle = 0 $ for all $\mathbf{u}$. This implies the result
\begin{figure}[h]
\begin{center}
\begin{tikzpicture}

\draw (0,2)--(1,3.2)--(2.3,3.4)--(4.1,2.5)--(3.3,0.8)--(1.2,0.5)--cycle;
\node at (2,2) {$P$};
\node[left] at (-2,2) {$\pi(P)$};
\node[above] at (-0.75,2) {$u$};
\draw[thin] (-2,4)--(-2,0);
\draw[dashed]  (2.3,3.4) -- (-2,3.4);
\draw[dashed]  (1.2,0.5) -- (-2,0.5);
\draw[->] (-0.5,2) -> (-1,2);
\draw[ultra thick] (-2,3.4) -- (-2,0.5);

\draw[ultra thick] (2.3,3.4)--(4.1,2.5)--(3.3,0.8)--(1.2,0.5);
\draw[ultra thick, dashed] (1.2,0.5)--(0,2)--(1,3.2)--(2.3,3.4);

\end{tikzpicture}
\end{center}
\caption{Projection of a polygon onto a line segment.}
\end{figure}
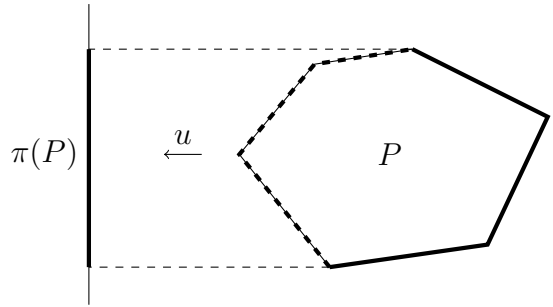

\end{proof}

In light of these relations we now give the following definition.

\begin{definition}
A $k$-balanced Minkowski weight on a polytope $P$ is a function $\omega_k:\mathcal{F}_k(P)\to\mathbb{R}$ such that for every $F\in\mathcal{F}_{k+1}(P)$ we have the following equality in the subspace $\textrm{lin}(F)$:
\[
\sum_{\substack{G\subset F \\ \textrm{a facet}}} \omega_{k}(G)\cdot \mathbf{u}_{G/F}=0.
\]
Here $\mathbf{u}_{G/F}$ is the unit outer normal in direction $G$ in the subspace $\textrm{lin}(F)$. The set of all $k$-balanced Minkowski weights on $P$ is denoted $\Omega_k(P)$, and $\Omega(P):=\bigoplus_k \Omega_k(P)$.
\end{definition}

\begin{remark}\label{rem:fin}
The set of all weak Minkowski summands of $P$ can be identified with the set of \emph{positive} 1-weights, which can be described as
\[
\left\{y\in\mathbb{R}^{f_1(P)}~:~\begin{array}{rcl}\sum_{i=1}^m y(E_i)\cdot \vec{E_i}&=0&\textrm{ if $E_1,\cdots,E_m$ form a 2-face},\\   y(E)&\geq 0&\textrm{ for all edges }E.\end{array} \right\}.
\]
This is a pointed cone in $\mathbb{R}^{f_1(P)}$, so it has finitely many rays. The rays correspond to \emph{indecomposable} polytopes $Q$, i.e., polytopes whose only weak Minkowski summands are its scalar multiples (a three dimensional example is a pyramid over a square). These correspond to the finitely many generators of the subalgebra $\Pi(P)$.
\end{remark}

Finally, McMullen uses the Minkowski map to obtain a different presentation of $\Pi(P)$.

\begin{theorem}
The Minkowski map $\phi$ induces an graded isomorphism $\Pi(P)\cong\Omega(P)$ as graded vector spaces.
\end{theorem}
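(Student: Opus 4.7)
The plan is to verify four properties of the Minkowski map $\phi$: it is a graded group homomorphism, its image lies inside $\Omega(P)$, it is injective, and it is surjective onto $\Omega(P)$. I will address each in turn, leaving surjectivity as the main obstacle.

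For the grading, fix a $(d-k)$-frame $U$. The frame functional $V_U(Q)=\textrm{Vol}_k(Q^U)$ is translation invariant and homogeneous of degree $k$, since $(\lambda Q)^U=\lambda Q^U$ and $\textrm{Vol}_k$ scales as $\lambda^k$. By Corollary \ref{cor:homog}, $V_U$ vanishes on every $\Pi_j$ with $j\neq k$. Hence $\phi$ sends $\Pi_k(P)$ into the $k$-th summand $\mathbb{R}^{f_k(P)}$, and $\phi$ is a graded homomorphism. To see that the image lies in $\Omega(P)$, fix $F\in\mathcal{F}_{k+1}(P)$ and $Q\preceq P$. Because the normal fan of $P$ refines that of $Q$, the face $Q^{U(F)}\subset\textrm{aff}(F)$ has as its facets (inside $\textrm{lin}(F)$) exactly the polytopes $Q^{U(G)}$ for $G$ a facet of $F$ in $P$, with outer unit normals $\mathbf{u}_{G/F}$. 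Applying the Minkowski relations (Theorem \ref{thm:minkrel}) to $Q^{U(F)}$ inside $\textrm{lin}(F)$ yields
\[
\sum_{G\text{ facet of }F}\mathbf{u}_{G/F}\cdot V_{U(G)}(Q)=0,
\]
which is precisely the balanced weight condition. By linearity this extends to all of $\Pi(P)$, so $\phi$ factors through $\Omega(P)$.

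For injectivity I would argue one graded piece at a time. Let $x\in\Pi_k(P)$ with $\phi_k(x)=0$. Using the logarithm decomposition of Theorem \ref{thm:polynomiality}, every class in $\Pi_k(P)$ can be written as an $\mathbb{R}$-linear combination of elements $\tfrac{1}{k!}(\log\llbracket Q\rrbracket)^k$ with $Q\preceq P$, which are the degree-$k$ coefficients of $\llbracket nQ\rrbracket$ regarded as a polynomial in $n$. Applying $V_{U(F)}$ to this polynomial identity and using $V_{U(F)}(\llbracket nQ\rrbracket)=\textrm{Vol}_k((nQ)^{U(F)})=n^k V_{U(F)}(Q)$ pins down $V_{U(F)}$ on the top-degree coefficient in terms of genuine face volumes of $Q$. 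The hypothesis $\phi_k(x)=0$ then translates into an equality of all face volumes of two formal combinations of polytopes in every direction $U(F)$. Theorem \ref{thm:genmink} (Generalized Minkowski), applied face by face, forces these polytopes to coincide up to translation, which promotes to $x=0$ in $\Pi(P)$.

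Surjectivity is where I expect the real obstacle. Both $\Pi_k(P)$ and $\Omega_k(P)$ are finite-dimensional $\mathbb{R}$-vector spaces for $k\geq 1$: the latter is a linear subspace of $\mathbb{R}^{f_k(P)}$ cut out by the balancing equations, and the former is finite-dimensional because $\Pi(P)$ is finitely generated. In degree one, Remark \ref{rem:fin} identifies weak Minkowski summands of $P$ with positive $1$-weights, so matching extreme rays on each side gives surjectivity for $k=1$. For higher $k$, the strategy is to combine the fact that $\Pi(P)$ is generated in degree one (a consequence of Theorem \ref{thm:structure}) with a compatibility statement asserting that the product $\ast$ on $\Pi(P)$ corresponds under $\phi$ to an intersection-type product on $\Omega(P)$ under which products of degree-one balanced weights remain balanced. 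Establishing this multiplicative compatibility, and concluding that products of degree-one generators span all of $\Omega_k(P)$, is the main technical step and is where the heart of McMullen's theorem really lies.
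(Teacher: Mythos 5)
The paper does not actually prove this theorem; it merely asserts injectivity by citing Theorem \ref{thm:genmink} and observes that the image is constrained by the Minkowski relations, so there is no paper proof to compare against and I will assess your argument on its own. Your treatment of the grading and of the image landing in $\Omega(P)$ is sound as a sketch: $V_U$ is homogeneous of degree $k$, so Corollary \ref{cor:homog} forces it to kill $\Pi_j$ for $j\neq k$; and for $Q\preceq P$ the face $Q^{U(F)}$ obeys a Minkowski relation inside $\textrm{lin}(F)$ with normals $\mathbf{u}_{G/F}$ (with terms for the non-facet $G$'s contributing zero volume). These are the right observations.

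Injectivity, however, has a genuine gap. Theorem \ref{thm:genmink} compares two honest polytopes; it does not separate a formal integer combination $x=\sum_i a_i\tfrac{1}{k!}\left(\log\llbracket Q_i\rrbracket\right)^k$ from zero. From $\phi_k(x)=0$ you correctly extract $\sum_i a_i V_{U(F)}(Q_i)=0$ for every $k$-face $F$ of $P$, but there is no pair of polytopes to hand to Theorem \ref{thm:genmink}, and the $a_i$ need not sort into two nice groups. What is actually required is McMullen's separation theorem from \cite{main} (that the totality of frame functionals $V_U$ separates $\Pi_k$), which is considerably deeper than Theorem \ref{thm:genmink}, together with an argument that for $x\in\Pi_k(P)$ the finitely many functionals indexed by faces of $P$ already suffice. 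Your phrase ``promotes to $x=0$'' elides both of these.

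For surjectivity, you rightly flag it as the obstacle, but the plan as stated is circular in a subtle way: even granting that $\phi$ is multiplicative and that $\Pi(P)$ is generated in degree one, you would only conclude that $\phi\bigl(\Pi(P)\bigr)$ is the subring of $\Omega(P)$ generated by $\Omega_1(P)$. That equals $\Omega(P)$ precisely when $\Omega(P)$ is itself generated in degree one, which is not an auxiliary compatibility lemma but the substantive theorem of \cite{weights}. So the proposal identifies the correct pieces but stops exactly where the real proof begins.
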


McMullen's simplifies his original proof of the g-theorem in \cite{weights} by working over the ring of Minkowski weights. One technical difficulty is that we have to actually give a ring structure to $\Omega(P)$, to define, compatible with $\phi$, a way to multiply weights. This is a delicate part of \cite{weights}, but ultimately McMullen gives a simplification of the proof of the g-theorem by replacing $\Pi(P)$ with $\Omega(P)$. He writes ``The reader who wishes to work through the proof of the g-theorem in the light of the weight algebra can effectively discard much of \cite{simple}".


\end{document}